\newtheorem{theorem}{Theorem}[section]
\newtheorem{proposition}[theorem]{Proposition}
\newtheorem{corollary}[theorem]{Corollary}
\newtheorem{lemma}[theorem]{Lemma}
\theoremstyle{remark}
\newtheorem{example}[theorem]{Example}
\newtheorem{remark}[theorem]{Remark}
\newcommand{\cH}{\mathcal{H}}
\newcommand{\cB}{\mathcal{B}}
\newcommand{\cT}{\mathcal{T}}
\newcommand{\cM}{\mathcal{M}}
\newcommand{\eps}{\varepsilon}
\newcommand{\real}{\operatorname{Re}}
\newcommand{\scp}[1]{\left\langle#1\right\rangle}
\newcommand{\Pp}{\mathbf{P}_{+}}
\newcommand{\Pm}{\mathbf{P}_{-}}
\newcommand{\Comp}{\mathbb C}
\newcommand{\norm}[1]{\left\| #1 \right\|}
\DeclareMathOperator{\diag}{diag}
\DeclareMathOperator{\e}{e}
\DeclareMathOperator{\RE}{Re}
\DeclareMathOperator{\ii}{i}
\title[Contractive realization theory]{Contractive realization theory for the annulus and other intersections of discs on the Riemann sphere   
}
\author[R. Baran]{Radomił Baran}
\address{Faculty of Mathematics and Computer Science, Jagiellonian University, ul. \L ojasiewicza 6, 30-348 Krak\'ow, Poland,}
\email{radomil.baran@doctoral.uj.edu.pl}
\author[P. Pikul]{Piotr Pikul}
\address{Faculty of Mathematics and Computer Science, Jagiellonian University, ul. \L ojasiewicza 6, 30-348 Krak\'ow, Poland,}
\email{piotr.pikul@uj.edu.pl}
\author[H. J. Woerdeman]{Hugo J. Woerdeman}
\address{H. J. Woerdeman, Department of Mathematics, Drexel University, Philadelphia, PA 19104, USA}
\email{hugo@math.drexel.edu}
\author[M. Wojtylak]{Micha\l{} Wojtylak}
\address{Faculty of Mathematics and Computer Science, Jagiellonian University, ul. \L ojasiewicza 6, 30-348 Krak\'ow, Poland,}
\email{michal.wojtylak@uj.edu.pl}
\begin{document}
\subjclass[2020]{47A48, 93B15, 30C10, 47A13}

\keywords{Contractive realization, Annulus, Multihole domains, Agler norm, Bohr inequality}

\maketitle

\begin{abstract}
We develop contractive finite dimensional realizations for rational matrix functions of one variable on domains that are not simply connected, such as the annulus. The proof uses multivariable contractive realization results as well as abstract operator algebra techniques. Other results include  new  bounds for the Bohr radius of the bidisk and the annulus. 
\end{abstract}

\section{Introduction}

A classical result due to Arov \cite{Arov} states that a rational matrix function $F(z)$ (of size $k\times l$) that takes on contractive values for $z\in {\mathbb D}=\{ z \in{\mathbb C} : |z|<1\}$ has a {\em contractive finite dimensional realization}; that is, there exists a contractive block matrix ${\tiny \begin{bmatrix} A & B \\ C & D \end{bmatrix}}\in\Comp^{d+k,d+l}$ so that 
\begin{equation}\label{real} F(z) = D +zC (I -zA)^{-1} B   ,\quad z\in {\mathbb D}. \end{equation}
Earlier results, that go back to \cite{dBR1,dBR2} (\cite{BallCohen} provides an overview), concern {\em analytic} operator-valued functions that take on contractive values on the open unit disk; in this case, a realization \eqref{real} exists where now $A,B,C,$ and $D$ are Hilbert space operators.
The theory of realizations is of importance in control and systems theory and in interpolation problems, and it provides a useful tool in operator theory in general; see, e.g., the monographs \cite{BGK,BGR}. 

In \cite{Agler} multivariable analogs were considered, where now the realization takes the form
\begin{equation}\label{real2} F(z) = D +C{\bf P}_-(z) (I -A{\bf P}_- (z))^{-1} B,
\quad {\bf P}_-(z)= \bigoplus\nolimits_{j=1}^d z_jI_{{\mathcal H}_j}, \  z  \in {\mathbb D}^d. \end{equation}
Agler \cite{Agler} recognized that an immediate generalization of the single variable result does not work. Indeed, a calculation (see, e.g, the proof of Proposition \ref{Jconminreal} below) shows that \eqref{real2} implies that
\begin{equation}\label{ineq} I - F(z)^*F(z) \ge C^* (I -A{\bf P}_- (z))^{*-1} (I - {\bf P}_-(z)^* {\bf P}_-(z) ) (I -A{\bf P}_- (z))^{-1} C , \end{equation}
where $\ge$ is the Loewner order; i.e., $P\ge Q$ iff $P-Q$ is positive semidefinite. This leads to the observation that if $z  \in {\mathbb D}^d$ is replaced by a tuple ${\bf T}=(T_1, \ldots , T_d)$ of commuting Hilbert space strict contractions (using the  Riesz functional calculus), we obtain that $\| F({\bf T}) \| \le 1$. As a consequence, the so-called Agler norm was introduced:
$$
\| F (\cdot) \|_{\mathcal T^{\circ}_{{\mathbb D}^d}} := \sup \{ \| F({\bf T}) \| : {\bf T} \hbox{\ is a tuple of commuting strict contractions} \}. 
$$
Agler's seminal result \cite{Agler} is that $ \| F (\cdot) \|_{\mathcal T^{\circ}_{{\mathbb D}^d}} \le 1$ if and only if $F(z)$ has a contractive realization \eqref{real2}. In the case of two variables, the Agler norm and the supremum norm coincide, due to a result by And\^o \cite{Ando}. This also allows for a finite dimensional result (see \cite{Kummert, Knese2021}), where a rational matrix function $F(z_1,z_2)$ takes on contractive values for $(z_1,z_2)\in {\mathbb D}^2$ if and only if there exists a contractive block matrix ${\tiny \begin{bmatrix} A & B \\ C & D \end{bmatrix}}$ so that
\begin{equation}\label{real2a} F(z) = D +C{\bf P}_-(z) (I -A{\bf P}_- (z))^{-1} B   , \ \ {\bf P}_-(z_1,z_2) = z_1I_{n_1}\oplus z_2 I_{n_2},  z  \in {\mathbb D}^2. \end{equation}
The finite dimensional realization theory was used to obtain determinantal representation result for stable polynomials; see \cite{Kummert,Woerdeman2012, grinshpan2016matrixvaluedhermitianpositivstellensatzlurking, GKVW2016, JW}. 

The results of Agler were generalized to many other domains (see, e.g. \cite{AT, BB}). Also, for the rational matrix function case, with finite dimensional realizations, generalizations were derived in \cite{GKW2013} and \cite{VW}. The latter paper also considers realizations of the form 
\begin{equation}\label{real3} F(z) = D +C{\bf P}_-(z) ({\bf P}_+(z) -A{\bf P}_- (z))^{-1} B   ,  \end{equation} where ${\bf P}_\pm$ are matrix valued polynomials that describe the domain of interest via the inequality ${\bf P}_+(z)^* {\bf P}_+(z) - {\bf P}_-(z)^*{\bf P}_-(z) >0$. In all the results mentioned above the domains $\Omega$ are polynomially convex (which, in the one-variable case, means that the complement ${\mathbb C}\setminus \Omega$ is connected). 

In this paper we go beyond the setting of polynomial convexity, and allow the domain $\Omega \subseteq {\mathbb C}$ to have holes.
 Namely, we will consider $\Omega$ to be a bounded intersection of discs on the Riemann sphere, a case for which the polynomial matrices $\mathbf P_{\pm}(z)$ arise naturally; see Section~\ref{s3} for details.
A first example to think of is the annulus $\Omega= \{ z \in {\mathbb C} : 0<r <|z|<R \},$ where  ${\bf P}_\pm$ take on the form
$$
{\bf P}_+(z) = RI_{n_1} \oplus zI_{n_2}, \ {\bf P}_-(z) = zI_{n_1} \oplus rI_{n_2} .
$$
 We refer the reader to \cite{jury2023positivity,mccullough2023geometric} for some recent developments on related theory for the annulus; additional references concerning spectral constants will appear in Section~\ref{sec:annulus}.

Our main result, Theorem \ref{mainT},  states the existence of a realization \eqref{real3} with a contractive matrix  ${\tiny \begin{bmatrix} A & B \\ C & D \end{bmatrix}}$, for any rational matrix function $F$ defined on $\Omega$ when a corresponding Agler norm is strictly less than one. In order to prove this result, we will be making use of the realization result in \cite[Theorem 3.4]{ grinshpan2016matrixvaluedhermitianpositivstellensatzlurking}, as well as the operator algebra techniques developed by Blecher, Ruan and Sinclair \cite{Blecher1990}. 
As part of the argument, we show in Theorem~\ref{preptheoremD} that the Agler norm on $\Omega$ equals
$$
\inf\{\norm{G}_{\mathcal T^\circ_{\mathbb D^d}}: G\circ\gamma=F\},   
$$
 where $\gamma$ is the natural embedding of $\Omega$ into the polydisk. 
The equality, arising from the works  \cite{Mittal,MittalPaulsen}, can be also viewed as a result on complete extension sets with respect to the Agler norms,  cf.\ \cite{agler2023complete}. Furthermore, it   has also consequences for the estimations of the Bohr radius for the annulus,  which is the second main outcome of this paper.

Recall that in \cite{Bohr14} Bohr showed his inequality, which can be reformulated  
that for any contractive operator on a Banach space the disc of radius 3 centered at zero is its spectral set, cf.\ \cite{KM66}. Recent activity on the topic can be found in \cite{BHSV2025,knese2024three,paulsen2022}.  In  the present paper we show that for 
a contractive element of  a Banach algebra $T$ satisfying $\norm{T^{-1}}\leq r^{-1}$   ($r<1$)  the annulus  with radii $K_2^{-1}$ and $rK_2$ is a $(1+\sqrt 2)$-spectral set; see Theorem~\ref{Banach}. The constant $K_2$ is the 2-variate version of the Bohr constant, and is known to lie in the interval $(0.3006,1/3)$. We are able to narrow the interval to  $(0.3006,0.3177)$ in Theorem~\ref{K2est}.

 Our paper is organized as follows. In Section~\ref{sec:aglernorm} we provide preparatory  results on the Agler norm on the set of rational functions without poles in a set $\Omega$, which is a bounded intersection of discs in the Riemann sphere. In Section~\ref{s3} we discuss the finite dimensional contractive realization theorem for such domains. In Section~\ref{sec:domains} we provide a sufficient condition for the equality of two Agler norms defined over two sets of operators, distinguished from each other by the boundary behaviour.  These results are used in unifying the various definitions of the Agler norm. In Section~\ref{sec:annulus} we present  special cases, focusing on the annulus. Section~\ref{sec:bohr} is devoted to new results related to the Bohr inequality  for the annulus and the bidisk.

We next discuss notation. The set of bounded operators between Hilbert spaces $\mathcal H_1$ and $\mathcal H_2$ will 
 be denoted by $\mathcal B(\mathcal H_1,\mathcal H_2)$. The set of matrices of size $n\times n$ will be denoted by $\mathcal M_{n}(\Comp)$. In the manuscript we will use several norms on several spaces. In two instances we will not indicate the space in the subscript, namely for $A\in\mathcal B(\mathcal H):=\mathcal B(\mathcal H,\mathcal H)$  the symbol $\norm A$ will stand  its operator norm and for  a matrix $A\in\mathcal M_n(\Comp)$ the norm $\| A\|$ stands for the spectral norm (= induced 2-norm). For a closed set with nonempty interior $\Xi\subseteq\Comp^l$ ($l\geq0$)   by $\mathcal{R}(\Xi)$ we denote the algebra of rational functions, analytic on (some neighbourhood of) $\Xi$ normed with the supremum norm. Subsequently, by $\cM_n(\mathcal{R}(\Xi))  $ 
we understand the algebra of matrix-valued rational functions, analytic on (some neighbourhood of) $\Xi$, endowed
with the  norm 
\begin{equation}\label{RMnorm}
\|F\|_{  \cM_n(\mathcal{R}(\Xi)) }=\sup_{z\in\Xi} \| F(z)\|.
\end{equation}

\section{ The Agler norm on intersection of discs on the Riemann sphere} \label{sec:aglernorm}

In what follows we will consider a bounded intersection $\Omega$  of closed discs on the Riemann sphere. 
More precisely,
\begin{equation}\label{Omegadef}
\Omega=\bigcap_{j=1}^k\Omega_j, \quad \Omega_j= \begin{cases} 
\{ z\in\Comp : |z-\alpha_j|\leq r_j \} &:\ j=1,\ldots, k_1\\ 
\{ z\in\bar\Comp : |z-\alpha_j|\geq r_j \} &:\ j=k_1+1,\ldots, k_2\\
\{ z\in\bar\Comp : \RE(\e^{\ii\theta_j} z)  \leq r_j \}  &:\ j=k_2+1,\ldots, k
\end{cases}.
\end{equation}
where 
$$
1\leq k_1\leq k_2 \leq k,\quad
\begin{array}{c} r_1,\dots,r_{k_2}>0,\\ r_{k_2+1},\dots , r_{k}\geq 0,  \end{array}\ 
\begin{array}{c} \\
0\leq \theta_{k_2+1},\dots , \theta_{k} <2\pi . \end{array}
$$
Note that, by definition, $\Omega_1$ (and hence $\Omega$) is a subset  of $\Comp$.
 Further, we define the related $k\times k$ matrix polynomials
 \begin{eqnarray}\label{PpD}
&\Pp(z)=\diag(r_j)_{j=1}^{k_1}\oplus \diag(z-\alpha_{j})_{j=k_1+1}^{k_2} \oplus\diag( \e^{\ii\theta_j} z -r_j-1 )_{j=k_2+1}^k \\
&\Pm(z)=\diag(z-\alpha_{j})_{j=1}^{k_1}\oplus \diag(r_{j})_{j=k_1+1}^{k_2} \oplus\diag( \e^{\ii\theta_j} z -r_j+1 )_{j=k_2+1}^k \label{PmD}
\end{eqnarray}
Observe that $\mathbf{P}_+(z)$ is invertible for $z \in\Omega$ and
\begin{equation}\label{PgammaOmega}
     \mathbf{P}_-(z)\mathbf{P}_+(z)^{-1}= \diag(\gamma_1(z),\dots,\gamma_N(z))
    \end{equation}
    where $\gamma_j$
   is a M\"obius transform that maps $\Omega_j$ onto the $\overline{\mathbb D}$. Hence,
  $$
\gamma:=(\gamma_1,\dots,\gamma_k):
 \Omega\to \overline{\mathbb D^k}.
  $$

We define the class $\mathcal T_\Omega$ as all those operators $T$ acting on some Hilbert space $\mathcal H$ such that 
\begin{eqnarray}\label{TAr1}
 && T-\alpha_j I \textrm{ is invertible,}\  j={k_1+1},\dots,k_2,\\
 && 
 \e^{\ii\theta_j}T-(r_j+1)I \textrm{ is invertible,}\  j={k_2+1},\dots,k \label{TAr1b},\\
 && \|\gamma_j(T) \| \leq 1,\quad j=1,\dots,k .\label{TAr2}
\end{eqnarray}
\begin{remark}
Note that \eqref{TAr1} and \eqref{TAr1b} are equivalent to $\Pp(T)$ being invertible.
Under this assumption the condition \eqref{TAr2} can be rewritten  equivalently as 
\begin{equation}\label{PTineq}
\Pp(T)^*\Pp(T) - \Pm(T)^*\Pm(T) \geq0.
\end{equation}

\end{remark}
Let us also note that $\gamma(T)=(\gamma_1(T),\ldots,\gamma_k(T))$ is a well-defined tuple of commuting contractions,
for any operator $T\in\cT_{\Omega}$.

Besides the class $\mathcal T_\Omega$ we also define the class $\mathcal T_{\mathbb D^k}$ as the class of all $k$-tuples of commuting Hilbert space (not necessarily strict) contractions.
 For $F\in \mathcal{M}_n(\mathcal R(\Xi))$  ($\Xi=\Omega$ or $\Xi=\overline{\mathbb D}^k$)  we introduce {\em the Agler norm (with respect to the class $\mathcal T_{\Xi}$)}
\begin{equation}\label{Aglerdef}
\|F\|_{\mathcal T_{\Xi}} := \sup\limits_{T \in\mathcal T_{\Xi}} \|F(T)\|.
\end{equation}               
Given an operator $T$ we say that  $\Omega$ is a \emph{$\Psi_{cb}$-complete spectral set for $T$} (where $\Psi_{cb}>0$), if the spectrum of $T$ is contained in $\Omega$ and $$
\| F(T)\|\leq\Psi_{cb} \sup_{z\in\Omega}\| F(z)\|,\quad F\in \mathcal{M}_n(\mathcal  R(\Omega) ).
$$ 
The constant $\Psi_{cb}$ estimates the Agler norm, as the following proposition shows. The result is an adaptation of the result of Badea,  Beckermann and Crouzeix on the absolute bound \cite{BBC2009}.

\begin{proposition}\label{basicAglerD}
Let $\Omega$  be the bounded intersection of discs on the Riemann sphere given by \eqref{Omegadef}. For each $T$ in the class $\mathcal T_\Omega$  the set $\Omega$ is a $\Psi_{cb}$-complete spectral set with $ \Psi_{cb}\leq k+k(k-1)/\sqrt 3$. 
In particular, the Agler norm is finite and bounded from above as
$$
\norm F_{\mathcal T_\Omega}\leq( k+k(k-1)/\sqrt 3)\sup_{z\in\Omega}\| F(z)\|,\quad F\in \mathcal{M}_n(\mathcal  R(\Omega) ).
$$
\end{proposition}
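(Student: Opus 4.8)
The plan is to derive the proposition from the theorem of Badea, Beckermann and Crouzeix on $K$-spectral sets for intersections of disks of the Riemann sphere \cite{BBC2009}, the bridge being the M\"obius charts $\gamma_1,\dots,\gamma_k$ from \eqref{PgammaOmega}. As a preliminary I would record that $\sigma(T)\subseteq\Omega$ for every $T\in\mathcal T_\Omega$, so that $F(T)$ is meaningful for $F\in\mathcal M_n(\mathcal R(\Omega))$. Each $\gamma_j$ is a M\"obius transformation with at most one pole, which lies outside $\Omega_j$, and conditions \eqref{TAr1}--\eqref{TAr1b} --- equivalently, the invertibility of $\Pp(T)$ --- say exactly that this pole misses $\sigma(T)$; thus $\gamma_j(T)$ is well defined by the holomorphic functional calculus, and $\gamma_j(\sigma(T))=\sigma(\gamma_j(T))\subseteq\overline{\mathbb D}$ by the spectral mapping theorem together with \eqref{TAr2}. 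Hence $\sigma(T)\subseteq\gamma_j^{-1}(\overline{\mathbb D})=\Omega_j$ for every $j$, and intersecting gives $\sigma(T)\subseteq\Omega$.

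The first main step is to show that each individual $\Omega_j$ is a complete spectral set for $T$ with constant $1$. Since $\gamma_j(T)$ is a Hilbert space contraction, the matrix-valued von Neumann inequality (Sz.-Nagy unitary dilation, applied entrywise) gives $\norm{G(\gamma_j(T))}\le\sup_{|w|\le1}\norm{G(w)}$ for every $G\in\mathcal M_n(\mathcal R(\overline{\mathbb D}))$. Because $\gamma_j$ maps a neighbourhood of $\Omega_j$ biholomorphically onto a neighbourhood of $\overline{\mathbb D}$, every $H\in\mathcal M_n(\mathcal R(\Omega_j))$ factors as $H=G\circ\gamma_j$ with $G=H\circ\gamma_j^{-1}\in\mathcal M_n(\mathcal R(\overline{\mathbb D}))$; the composition rule for the functional calculus (applicable since $\gamma_j(\sigma(T))=\sigma(\gamma_j(T))$) yields $H(T)=G(\gamma_j(T))$, whence $\norm{H(T)}\le\sup_{|w|\le1}\norm{G(w)}=\sup_{z\in\Omega_j}\norm{H(z)}$.

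The second main step is the combination argument of \cite{BBC2009}: from the data that $\Omega_1,\dots,\Omega_k$ are each complete spectral sets of constant $1$ for $T$, their analysis upgrades this to the conclusion that $\Omega=\bigcap_j\Omega_j$ is a $\Psi_{cb}$-complete spectral set (in the terminology of this section) with $\Psi_{cb}\le k+k(k-1)/\sqrt3$; the factor $1/\sqrt3$ emerges from an elementary extremal estimate in their two-disk lemma, and the general bound is assembled by adjoining one disk at a time. Two small adaptations are needed. (i) \cite{BBC2009} is phrased for scalar rational functions, but the proof uses only the holomorphic functional calculus, the triangle inequality and von Neumann's inequality, each of which holds verbatim for $\mathcal M_n$-valued rational functions; hence the argument in fact delivers the \emph{complete} spectral-set bound asserted here. (ii) \cite{BBC2009} treats bounded disks, whereas some of our $\Omega_j$ are complements of disks or half-planes --- unbounded disks of the Riemann sphere --- but this is immaterial, since the whole argument is carried out in the charts $\gamma_j$, and \eqref{TAr1}--\eqref{TAr1b} are precisely what makes $\gamma_j(T)$, and the pull-back of $\mathcal R(\overline{\mathbb D})$ along $\gamma_j$, well defined. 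Taking the supremum over $T\in\mathcal T_\Omega$ in the resulting inequality then gives both the finiteness of $\norm F_{\mathcal T_\Omega}$ and the displayed bound.

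The real weight of the argument lies in the second main step --- the Badea--Beckermann--Crouzeix combination lemma and its $\sqrt3$ constant --- which I would simply cite; the only genuinely new content is the (essentially bookkeeping) verification that their reasoning survives the passage to matrix-valued functions and to disks through $\infty$, a verification for which the hypotheses \eqref{TAr1}--\eqref{TAr2} defining $\mathcal T_\Omega$ are exactly tailored.
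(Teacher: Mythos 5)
Your proof is correct and follows essentially the same route as the paper: check that $\sigma(T)\subseteq\Omega$ and that each $\Omega_j$ is a spectral set for $T$ (via $\gamma_j(T)$ being a contraction and von Neumann's inequality), then invoke Theorem~1.1 of Badea--Beckermann--Crouzeix. The only remark worth making is that your adaptation (i) is unnecessary: that theorem is already stated for disks of the Riemann sphere and takes the \emph{scalar} spectral-set property of each $\Omega_j$ as its hypothesis while delivering the \emph{complete} $K$-spectral-set conclusion, so no matrix-valued re-derivation of their combination argument is required.
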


\begin{proof}
Take arbitrary $T$ from $\mathcal T_\Omega$. First observe that due to the definitions of $\Omega$ and $\mathcal T_\Omega$, the spectrum of $T$ is contained in $\Omega$.
Further, each of the discs $\Omega_j$ is a spectral set for $T$. 
Hence, we may apply Theorem 1.1 of \cite{BBC2009} and  obtain that  $\Omega $ is a $\Psi_{cb}$-complete spectral set for $T$ with  $\Psi_{cb}\leq    k+k(k-1)/\sqrt 3$. As $T$ was arbitrary the second claim follows.
\end{proof}

 Let us discuss  now the topic of operator algebras. Let $\mathcal{A}$ be a  normed algebra. Using the identification $\mathcal{M}_{n}(\mathcal{A}) \cong \mathcal{A} \otimes \mathcal{M}_{n}(\mathbb{C})$ we can endow $\mathcal{M}_{n}(\mathcal{A})$ with the natural mutiplication. Suppose a collection of norms $\{\|\cdot\|_{\mathcal{M}_{n}(\mathcal{A})}\}_{n=1}^\infty$ is given.
The following conditions are known as the \textit{Blecher-Ruan-Sinclair} axioms:
\begin{enumerate}[\rm (i)]
    \item for all $n \in \mathbb{Z}_+$ and for all $M, N \in \mathcal{M}_{n}(\mathbb{C})$, $X \in \mathcal{M}_{n}(\mathcal{A})$ we have
    $$
    \|MXN\|_{\mathcal{M}_{n}(\mathcal{A})} \leq \|M\| \|X\|_{\mathcal{M}_{n}(\mathcal{A})} \|N\|.
    $$
    \item If $X \in \mathcal{M}_{n}(\mathcal{A})$ and $Y \in \mathcal{M}_{m}(\mathcal{A})$, then
    $$
    \|X \oplus Y\|_{\mathcal{M}_{m+n}(\mathcal{A})} = \max (\|X\|_{\mathcal{M}_{n}(\mathcal{A})}, \|Y\|_{\mathcal{M}_{m}(\mathcal{A})}).
    $$
    \item If $X, Y \in \mathcal{M}_n(\mathcal{A})$, then 
    $$
    \|XY\|_{\mathcal{M}_{n}(\mathcal{A})} \leq \|X\|_{\mathcal{M}_{n}(\mathcal{A})}\|Y\|_{\mathcal{M}_{n}(\mathcal{A})}.
    $$
\end{enumerate}

Subsequently,  if $\mathcal{H}$ is a Hilbert space, we call $\pi: \mathcal{A} \to \mathcal{B}(\mathcal{H})$ a \textit{unital completely isometric algebra homomorphism} if $\pi$ is an algebra homomorphism, $\pi(e) = I_{\mathcal{H}}$ and 
$$
\|X\|_{\mathcal{M}_{n}(\mathcal{A})} = \| [\pi(X_{ij})]_{ij=1}^n  \|_{\mathcal B(\mathcal H^n)} ,  \quad X = [X_{ij}]_{ij=1}^n \in \mathcal{M}_{n}(\mathcal{A}).
$$

Theorem 3.1 and Corollary 3.2 of \cite{Blecher1990} can be now summarised as follows. 

\begin{theorem} \label{blech}
Assume  $\mathcal A$ is a normed algebra $\mathcal{A}$ with unit of norm $1$, the system of norms
$\{\|\cdot\|_{\mathcal{M}_{n}(\mathcal{A})}\}_{n=1}^\infty$ satisfies the Blecher-Ruan-Sinclair axioms,
and $\mathcal I$ is a two-sided ideal in $\mathcal{A}$. 
Then there exists  a unital completely isometric algebra homomorphism
$\pi: \mathcal{A}/\mathcal I \to \mathcal{B}(\mathcal{H})$, for some Hilbert space $\mathcal{B}(\mathcal{H})$.
\end{theorem}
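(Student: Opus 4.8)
The plan is to recognize the statement as the Blecher--Ruan--Sinclair abstract characterization of operator algebras, in its quotient form \cite[Theorem~3.1 and Corollary~3.2]{Blecher1990}, and to reduce it to the form proved there by routine bookkeeping with operator spaces; I would not reprove the Blecher--Ruan--Sinclair construction itself, which is the only substantial ingredient.

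First I would record the reformulation of the hypotheses that makes the connection transparent. Axioms~(i) and~(ii) are precisely Ruan's axioms, so together they state that the system $\{\|\cdot\|_{\mathcal{M}_n(\mathcal{A})}\}_{n\geq1}$ endows $\mathcal{A}$ with the structure of an (abstract) operator space; axiom~(iii) states exactly that the matrix amplifications of the multiplication $\mathcal{A}\times\mathcal{A}\to\mathcal{A}$ are contractive, i.e.\ that multiplication is completely contractive. Since the unit $e$ has norm $1$, axiom~(ii) forces the $n\times n$ identity of $\mathcal{M}_n(\mathcal{A})$ to have norm $1$ as well. Thus $\mathcal{A}$ is a unital operator space with completely contractive multiplication, which is the setting of \cite[Theorem~3.1]{Blecher1990}: there exists a unital completely isometric homomorphism of $\mathcal{A}$ onto a (not necessarily closed or selfadjoint) subalgebra of $\mathcal{B}(\mathcal{H}_0)$ for some Hilbert space $\mathcal{H}_0$. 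I would treat the proof of that theorem as a black box; it realizes $\mathcal{A}$ completely isometrically inside some $\mathcal{B}(\mathcal{K})$ via Ruan's representation theorem, factors the multiplication as a complete contraction on the Haagerup tensor product $\mathcal{A}\otimes_h\mathcal{A}$, and assembles $\mathcal{H}_0$ together with the representation from an inductive system of partial representations whose norms are kept under control by the functoriality of $\otimes_h$. Completeness of $\mathcal{A}$ plays no role (one may pass to the norm closure of the image if a Banach algebra is desired).

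It remains to descend to the quotient $\mathcal{A}/\mathcal{I}$, which is \cite[Corollary~3.2]{Blecher1990}. Since $\operatorname{dist}(\cdot,\mathcal{I})=\operatorname{dist}(\cdot,\overline{\mathcal{I}})$ and the closure $\overline{\mathcal{I}}$ is again a two-sided ideal, I may assume $\mathcal{I}$ closed, so that $\mathcal{A}/\mathcal{I}$ is a unital normed algebra whose unit has norm $1$ (discarding the trivial case $\overline{\mathcal{I}}=\mathcal{A}$). I would equip $\mathcal{M}_n(\mathcal{A}/\mathcal{I})\cong\mathcal{M}_n(\mathcal{A})/\mathcal{M}_n(\mathcal{I})$ with the quotient norms $\|X+\mathcal{M}_n(\mathcal{I})\|:=\inf\{\|X+Y\|_{\mathcal{M}_n(\mathcal{A})}:Y\in\mathcal{M}_n(\mathcal{I})\}$. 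That these satisfy axioms~(i) and~(ii) is the standard fact that a quotient of an operator space by a closed subspace is again an operator space, and that the quotient map $q\colon\mathcal{A}\to\mathcal{A}/\mathcal{I}$ is then a complete quotient map. Axiom~(iii) for $\mathcal{A}/\mathcal{I}$ follows by lifting: given $X,Y\in\mathcal{M}_n(\mathcal{A}/\mathcal{I})$ of norm $<1$, lift them to $\widetilde{X},\widetilde{Y}\in\mathcal{M}_n(\mathcal{A})$ of norm $<1$; since $\mathcal{I}$ is a two-sided ideal the matrix product respects the quotient, so $q_n(\widetilde{X}\widetilde{Y})=XY$, whence axiom~(iii) for $\mathcal{A}$ gives $\|XY\|\leq\|\widetilde{X}\widetilde{Y}\|<1$. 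Thus $\mathcal{A}/\mathcal{I}$ again satisfies all the Blecher--Ruan--Sinclair axioms with unit of norm $1$, and the previous paragraph applied to $\mathcal{A}/\mathcal{I}$ produces the desired unital completely isometric homomorphism $\pi\colon\mathcal{A}/\mathcal{I}\to\mathcal{B}(\mathcal{H})$. The main obstacle is therefore entirely localized in the cited Blecher--Ruan--Sinclair construction; everything else is the verification that the axioms as stated are equivalent to ``unital operator space with completely contractive multiplication'' and that this structure passes to quotients.
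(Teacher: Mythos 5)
Your proposal is correct and follows essentially the same route as the paper, which gives no proof at all but simply records this statement as a summary of Theorem~3.1 and Corollary~3.2 of Blecher--Ruan--Sinclair \cite{Blecher1990}. The additional bookkeeping you supply (passing the axioms and the unit norm to the quotient $\mathcal{A}/\mathcal{I}$) is accurate and is exactly the content the paper delegates to Corollary~3.2.
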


Clearly, $\mathcal R(\Xi)$ with the matrix norm structure \eqref{RMnorm} is an example of an algebra satisfying (i)--(iii).
For our purposes, however, we  need to norm $\mathcal R(\Xi)$ with the Agler norm.
We present the following lemma for completeness.

\begin{lemma}\label{TRBS}
    The Agler norm \eqref{Aglerdef} is a norm  on $\mathcal{M}_n(\mathcal R(\overline{\mathbb D^k}))$ and it satisfies the Blecher-Ruan-Sinclair axioms.
\end{lemma}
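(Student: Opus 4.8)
The plan is to verify directly that $\|\cdot\|_{\mathcal T_{\overline{\mathbb D^k}}}$ is a genuine norm and then check axioms (i)--(iii) one at a time, exploiting the fact that $\mathcal T_{\mathbb D^k}$ is the class of all commuting tuples of contractions and that the Agler norm is defined as a supremum of the ordinary operator norms $\|F(T)\|$ over this class.

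First I would establish that $\|\cdot\|_{\mathcal T_{\overline{\mathbb D^k}}}$ is a norm. Finiteness follows from Proposition~\ref{basicAglerD} (applied with $\Omega=\overline{\mathbb D^k}$, viewed as an intersection of $2k$ discs on the Riemann sphere) or, more elementarily, from evaluating at the coordinate tuple of scalar points $z\in\overline{\mathbb D^k}$, which shows $\|F\|_{\mathcal T_{\overline{\mathbb D^k}}}\ge\sup_z\|F(z)\|>0$ whenever $F\ne0$, so the norm separates points. Homogeneity and the triangle inequality are immediate from the corresponding properties of $T\mapsto\|F(T)\|$ for each fixed $T$ and the fact that a supremum of seminorms is a seminorm. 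The only mild subtlety is that $F(T)$ is well defined for every $T\in\mathcal T_{\mathbb D^k}$ because the entries of $F$ are rational functions analytic on a neighbourhood of $\overline{\mathbb D^k}\supseteq$ joint spectrum of $T$, so the (commuting) functional calculus applies; this is exactly the remark made just before the lemma.

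For the Blecher--Ruan--Sinclair axioms, the key observation is that for $X=[X_{ij}]\in\mathcal M_n(\mathcal R(\overline{\mathbb D^k}))$ and any $T\in\mathcal T_{\mathbb D^k}$, evaluation $X\mapsto X(T):=[X_{ij}(T)]\in\mathcal B(\mathcal H^n)$ is a unital algebra homomorphism, and $\|X\|_{\mathcal M_n(\mathcal T_{\overline{\mathbb D^k}})}=\sup_{T}\|X(T)\|_{\mathcal B(\mathcal H^n)}$ where the matrix Agler norm is the one induced by the scalar one via the identification $\mathcal M_n(\mathcal R(\overline{\mathbb D^k}))\cong\mathcal R(\overline{\mathbb D^k})\otimes\mathcal M_n(\mathbb C)$. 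Granting this, axiom (i) follows because $\|(MX N)(T)\|=\|M\,X(T)\,N\|\le\|M\|\,\|X(T)\|\,\|N\|$ for scalar matrices $M,N$ (which act as constants under the functional calculus), then take the supremum over $T$; axiom (iii) follows from submultiplicativity of the operator norm, $\|(XY)(T)\|=\|X(T)Y(T)\|\le\|X(T)\|\,\|Y(T)\|\le\|X\|_{\mathcal T}\|Y\|_{\mathcal T}$, again taking the supremum; and axiom (ii) follows because $(X\oplus Y)(T)=X(T)\oplus Y(T)$ as an operator on $\mathcal H^{n+m}=\mathcal H^n\oplus\mathcal H^m$, whose norm is $\max(\|X(T)\|,\|Y(T)\|)$, and the supremum of a maximum over $T$ equals the maximum of the two suprema.

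The step requiring the most care is pinning down precisely what "the Agler norm on $\mathcal M_n$" means and checking that it is compatible with the tensor identification — i.e.\ that evaluating a matrix of rational functions at $T$ and then taking the $\mathcal B(\mathcal H^n)$ norm is the correct matrix norm structure. One must observe that $[X_{ij}(T)]$ can be rewritten, under $\mathcal B(\mathcal H^n)\cong\mathcal M_n(\mathcal B(\mathcal H))$, as the image of $X$ under $\pi_T\otimes\mathrm{id}_{\mathcal M_n}$ where $\pi_T\colon\mathcal R(\overline{\mathbb D^k})\to\mathcal B(\mathcal H)$ is evaluation at $T$; since the Agler matrix norm is by definition $\sup_T\|(\pi_T\otimes\mathrm{id})(X)\|$, the three axioms are then inherited from the elementary operator-norm facts above applied uniformly in $T$. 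I would also note explicitly that the unit (the constant function $I_n$) has Agler norm $1$, as required implicitly by the hypotheses of Theorem~\ref{blech}. No single step is genuinely hard; the whole lemma is bookkeeping, which is why the authors flag it as stated "for completeness."
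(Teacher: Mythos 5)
Your handling of the Blecher--Ruan--Sinclair axioms is fine and essentially identical to the paper's: evaluation at a commuting tuple $\mathbf T$ is a unital algebra homomorphism, scalar matrices act as $I_{\mathcal H}\otimes M$ under this evaluation, and (i)--(iii) then follow from the corresponding operator-norm facts uniformly in $\mathbf T$. The gap is in the one step the paper itself flags as the only substantive one: \emph{finiteness} of the Agler norm on $\cM_n(\mathcal R(\overline{\mathbb D^k}))$.

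Your proposed route to finiteness is to apply Proposition~\ref{basicAglerD} ``with $\Omega=\overline{\mathbb D^k}$, viewed as an intersection of $2k$ discs on the Riemann sphere.'' This does not work. Proposition~\ref{basicAglerD} (and the Badea--Beckermann--Crouzeix theorem behind it) concerns a planar domain $\Omega\subseteq\Comp$ of the form \eqref{Omegadef} and a \emph{single} operator $T$ whose spectrum lies in $\Omega$; the polydisk $\overline{\mathbb D^k}\subseteq\Comp^k$ is a Cartesian product, not an intersection of discs in one complex variable, and the class $\mathcal T_{\mathbb D^k}$ consists of $k$-tuples of commuting contractions, not single operators. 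No version of that proposition is available here; indeed for $k\ge 3$ the von Neumann inequality fails, so one cannot bound $\|F(\mathbf T)\|$ by a universal constant times $\sup_{z}\|F(z)\|$ by citing a spectral-set result for discs. Your fallback --- evaluating at scalar points of $\overline{\mathbb D^k}$ --- yields only the lower bound $\|F\|_{\mathcal T_{\mathbb D^k}}\ge \sup_z\|F(z)\|$, i.e.\ positive-definiteness; it gives no upper bound. The missing argument is the one the paper supplies: since a rational $F$ analytic on a neighbourhood of $\overline{\mathbb D^k}$ is analytic on $(1+\varepsilon)\overline{\mathbb D^k}$ for some $\varepsilon>0$, one writes $F(\mathbf T)$ via the multivariate Cauchy integral over the torus of radius $1+\varepsilon$ and uses $\|(z_jI-T_j)^{-1}\|\le 1/\varepsilon$ for $|z_j|=1+\varepsilon$ and $\|T_j\|\le1$ (equivalently, one uses absolute convergence of the Taylor series of $F$ on the closed polydisk). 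Without such an estimate the lemma is not proved.
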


\begin{proof}

To show that the Agler norm is indeed a norm, we need to show that it is finite on
$\mathcal{M}_n(\mathcal R(\overline{\mathbb D^k}))$; the other norm axioms are easily checked. 
Let $F\in \mathcal{M}_n(\mathcal R(\mathbb D^k))$; by definition  $F\in \mathcal{M}_n(\mathcal R((1+\varepsilon)\overline{\mathbb D^k})$ for some $\varepsilon>0$. Taking an arbitrary tuple of commuting contractions $\mathbf T=(T_1,\dots,T_k)$, we have
\begin{eqnarray*}
\norm{F(\mathbf T)}_{\mathcal{M}_n(\mathcal B(\mathcal H))}&   \leq& \frac 1{(2\pi)^k}\int_{|z_1|=1+\varepsilon } \cdots\int_{|z_k|=1+\varepsilon } \sup_{z\in (1+\varepsilon)\mathbb D}\|F(z)\| \\
&&\cdot \norm{(z_1I-T_1)^{-1}}\cdots\norm{(z_kI-T_k)^{-1}} |dz_1|\cdots|dz_k| .
\end{eqnarray*}
To bound the latter independently of $\mathbf T$ note that 
$$\norm{(z_j I - T_j)^{-1}}\leq \frac{1}{|z_j|} \sum_{l=0}^\infty \left( \frac{\| T_j\|}{|z_j|} \right)^l \le \frac{1}{1+\varepsilon} \sum_{l=0}^\infty (1+\varepsilon)^{-l}= \frac 1\varepsilon, \ j=1,\dots,k,$$ producing in total 
$$
\norm{F(\mathbf T)}_{\mathcal{M}_n(\mathcal B(\mathcal H))}\leq\frac{(1+\varepsilon)^k}{\varepsilon ^k} \sup_{z\in (1+\varepsilon)\mathbb D}\|F(z)\| .
$$

Now let us show that the Agler norm satisfies Blecher-Ruan-Sinclair axioms; only  (i) requires some attention as (ii) and (iii) are elementary. 
Let  $F \in \cM_n(\mathcal{R}(\overline{\mathbb{D}^k}))$, $F=\sum_{\alpha\in\mathbb N^k}z^\alpha A_\alpha$,
$M, N \in \cM_n(\mathbb{C})$,  and $\mathbf T=(T_1,\dots,T_k)$ be an arbitrary tuple of commuting contractions
on some Hilbert space $\mathcal H$. We have
   \begin{eqnarray*}
   \|(MFN)(\mathbf T)\|_{\cM_n(\mathcal B(\mathcal H))} &=&\norm{ \sum_{\alpha}T^\alpha\otimes ( MA_\alpha N) }_{\cM_n(\mathcal B(\mathcal H))}\\
   &\leq&
   \|I_{\mathcal H}\otimes  M\|_{\cM_n(\mathcal B(\mathcal H))}   \| F(\mathbf T) \|_{\cM_n(\mathcal B(\mathcal H))} \|I_{\mathcal H}\otimes N\|_{\cM_n(\mathcal B(\mathcal H))} \\
   &\leq&  \|M\| \|F\|_{\mathcal T(\mathbb{D}^k)} \|N\|.
    \end{eqnarray*}
    Passing to supremum with respect to $\mathbf T$ and $\mathcal H$ on the left hand side proves the claim. 

\end{proof}

Observe that $\gamma$ induces an algebra homomorphism
 $\gamma^*: \cM_n(\mathcal{R}(\overline{\mathbb{D}^k})) \to \cM_n(\mathcal{R}(\Omega))$ by the formula 
 $$
 \gamma^*(F)(z) = F(\gamma(z)).
$$

\begin{lemma}\label{surjectivegamma*D}
The linear mapping $\gamma^*: \mathcal{R}(\overline{\mathbb{D}^k}) \to\mathcal{R}(\Omega)$ is a surjection,
contractive with respect to the Agler norms on both sides.
\end{lemma}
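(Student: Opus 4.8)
The plan is to deal with the two assertions separately: contractivity is immediate from unwinding the definition of the Agler norm, while surjectivity comes from a partial‑fraction (Mittag--Leffler) decomposition of a rational function on the Riemann sphere.

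For the contractivity I would proceed as follows. We already know, from the paragraph preceding the lemma and the remark on $\gamma(T)$, that $\gamma^*$ maps $\mathcal R(\overline{\mathbb D^k})$ into $\mathcal R(\Omega)$ and that for each $T\in\mathcal T_\Omega$ the tuple $\gamma(T)=(\gamma_1(T),\dots,\gamma_k(T))$ consists of commuting contractions, hence $\gamma(T)\in\mathcal T_{\mathbb D^k}$ --- indeed $\|\gamma_j(T)\|\le 1$ is precisely condition \eqref{TAr2}. Since both the Riesz functional calculus $h\mapsto h(T)$ on $\mathcal R(\Omega)$ and the map $\gamma^*$ are unital algebra homomorphisms agreeing on the coordinate functions, one gets $(\gamma^*F)(T)=F(\gamma(T))$ for every $F\in\mathcal R(\overline{\mathbb D^k})$. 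Therefore $\|(\gamma^*F)(T)\|=\|F(\gamma(T))\|\le\sup_{\mathbf S\in\mathcal T_{\mathbb D^k}}\|F(\mathbf S)\|=\|F\|_{\mathcal T_{\mathbb D^k}}$, and taking the supremum over $T\in\mathcal T_\Omega$ yields $\|\gamma^*F\|_{\mathcal T_\Omega}\le\|F\|_{\mathcal T_{\mathbb D^k}}$.

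For surjectivity I would fix an arbitrary $g\in\mathcal R(\Omega)$, i.e.\ a rational function analytic on some open $U\supseteq\Omega$, so that every pole of $g$ lies in $\overline{\mathbb C}\setminus\Omega$. Decompose $g$ into partial fractions over $\overline{\mathbb C}$: write $g=c+\sum_i g_i$ (a finite sum), where $c$ is a constant and each $g_i$ is rational with a single pole $p_i\in\overline{\mathbb C}\setminus\Omega$ (the term with $p_i=\infty$, if present, being the nonconstant part of the polynomial part of $g$). Since $p_i\notin\Omega=\bigcap_\ell\Omega_\ell$, for each $i$ I can pick an index $j=j(i)$ with $p_i\notin\Omega_j$, choosing $j(i)\in\{1,\dots,k_1\}$ whenever $p_i=\infty$ (possible because $\Omega_1,\dots,\Omega_{k_1}$ are bounded discs). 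As $\gamma_j$ is a M\"obius transformation of $\overline{\mathbb C}$ carrying $\Omega_j$ onto $\overline{\mathbb D}$, it carries $\overline{\mathbb C}\setminus\Omega_j$ bijectively onto $\overline{\mathbb C}\setminus\overline{\mathbb D}$, so $G_i:=g_i\circ\gamma_j^{-1}$ is a rational function whose only pole is $\gamma_j(p_i)$, with $|\gamma_j(p_i)|>1$ or $\gamma_j(p_i)=\infty$; in either case $G_i\in\mathcal R(\overline{\mathbb D})$. I would then set $F(w_1,\dots,w_k):=c+\sum_i G_i(w_{j(i)})$; as a finite sum of one‑variable functions each analytic on a neighbourhood of $\overline{\mathbb D}$ in the relevant variable, $F$ is analytic on a neighbourhood of $\overline{\mathbb D^k}$, that is $F\in\mathcal R(\overline{\mathbb D^k})$. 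Finally, on a neighbourhood of $\Omega$ one computes $\gamma^*(F)(z)=c+\sum_i G_i(\gamma_{j(i)}(z))=c+\sum_i g_i(z)=g(z)$, and since two rational functions agreeing on a nonempty open set coincide, $\gamma^*(F)=g$.

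The routine parts are the well‑definedness of $\gamma^*$ and the sphere‑level partial‑fraction bookkeeping. The point that needs genuine care --- and which I expect to be the main obstacle --- is ensuring that each $G_i$ is analytic on an \emph{open neighbourhood} of the \emph{closed} disc $\overline{\mathbb D}$, not just on $\mathbb D$: this is exactly where one uses that $\gamma_j$ maps the open set $\overline{\mathbb C}\setminus\Omega_j$ onto $\overline{\mathbb C}\setminus\overline{\mathbb D}$, so that the pole $\gamma_j(p_i)$ is kept strictly outside $\overline{\mathbb D}$, and it is the reason one must route the pole of $g$ at infinity through one of the bounded discs $\Omega_1,\dots,\Omega_{k_1}$ (after the affine $\gamma_j$ it simply becomes a pole at infinity of a polynomial, which is harmless). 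A secondary check is the identity $(\gamma^*F)(T)=F(\gamma(T))$, which reduces to the two functional calculi involved being algebra homomorphisms that agree on the coordinate functions.
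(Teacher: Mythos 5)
Your proposal is correct and follows essentially the same route as the paper: the same partial-fraction decomposition of $g$ according to which $\Omega_j$ excludes each pole, composition with the rational inverses $\gamma_j^{-1}$ to build a one-variable-per-slot function $G$ on $\overline{\mathbb D^k}$, and the same one-line contractivity argument via $\gamma(T)\in\mathcal T_{\mathbb D^k}$. Your extra care about routing the pole at infinity through one of the bounded discs $\Omega_1,\dots,\Omega_{k_1}$ (which exist since $k_1\geq 1$) and about keeping the transformed poles strictly outside $\overline{\mathbb D}$ is a welcome refinement of a step the paper leaves implicit.
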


\begin{proof}
Given $F \in \mathcal{R}(\Omega)$ we can decompose
it into $F=\sum_{j=1}^k F_j$ with $F_j\in \mathcal{R}(\Omega_j)$, by grouping
fractions in the partial
fraction decomposition of $F$ according to location of their poles.
For $j=1,\ldots,k$ note that $\gamma_j^{-1}:\overline{\mathbb D}\to \Omega_j$ is a rational function.
Then 
\begin{equation}\label{FtoG}
G(z_1,\ldots,z_k):=\sum_{j=1}^k F_j(\gamma_j^{-1}(z_j))
\end{equation}
 is a well defined
rational function on $\overline{\mathbb D}^k$ and $\gamma^*(G)=F$.
   
To prove contractivity observe that for $G\in \cM_n(\mathcal R(\overline{\mathbb D^k})))$ we have 
$$
\norm{ \gamma^*(G)( T)  } =\norm{G(\gamma(T))}\leq \norm{ G   }_{\mathcal T(\mathbb D^k)}
$$
for any $T$ in $\mathcal T(\Omega)$, where the inequality follows from $\norm{\gamma_j(T)}\leq 1$, $j=1,\dots,k$.
Passing to the supremum over $T$ finishes the proof.
\end{proof}

We end up this preparatory section with a key result, which will be used subsequently both for
contractive realization and Bohr radius results. In order to do this, we define the quotient norm
$\|\cdot\|_{\mathcal T\ker\gamma^*}$ on $\cM_n(\mathcal{R}(\Omega))$ by
$$
\|F\|_{\mathcal T\ker\gamma^*} := \inf \{\|G\|_{\mathcal{T}( \mathbb{D}^k)}: G \in \cM_n(\mathcal{R}(\overline{\mathbb{D}^k})), \gamma^*(G) = F\}.
$$
The definition is correct due to  Lemma~\ref{surjectivegamma*D}.

\begin{theorem} \label{preptheoremD}
Let $\Omega$  be the bounded intersection of discs on the Riemann sphere given by \eqref{Omegadef}.
Then the quotient norm defined above and the Agler norm coincide, i.e., 
$$
\|F\|_{\mathcal T\ker\gamma^*} =  \|F\|_{\mathcal T_\Omega},\quad F \in \cM_m(\mathcal{R}(\Omega)).
$$
\end{theorem}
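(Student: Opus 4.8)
The plan is to prove the two inequalities separately. One direction is essentially Lemma~\ref{surjectivegamma*D}: if $\gamma^*(G)=F$ with $G\in\cM_m(\mathcal{R}(\overline{\mathbb D^k}))$, then for every $T\in\mathcal T_\Omega$ the tuple $\gamma(T)=(\gamma_1(T),\dots,\gamma_k(T))$ is a commuting tuple of contractions, so $\|F(T)\|=\|G(\gamma(T))\|\le\|G\|_{\mathcal T_{\mathbb D^k}}$; taking the supremum over $T$ and then the infimum over such $G$ gives $\|F\|_{\mathcal T_\Omega}\le\|F\|_{\mathcal T\ker\gamma^*}$. The substance of the theorem is the reverse inequality $\|F\|_{\mathcal T\ker\gamma^*}\le\|F\|_{\mathcal T_\Omega}$.

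For the reverse inequality I would invoke the abstract operator-algebra machinery set up just before the statement. Equip $\mathcal R(\overline{\mathbb D^k})$ with the Agler norm; by Lemma~\ref{TRBS} this is a normed algebra (with unit $1$ of norm one) whose matrix-norm system satisfies the Blecher--Ruan--Sinclair axioms. The set $\mathcal I:=\ker\gamma^*$ is a two-sided ideal, since $\gamma^*$ is an algebra homomorphism (and surjective, by Lemma~\ref{surjectivegamma*D}, so the quotient is $\mathcal R(\Omega)$ as an algebra, with $\|\cdot\|_{\mathcal T\ker\gamma^*}$ the quotient norm). Theorem~\ref{blech} then furnishes a unital completely isometric algebra homomorphism $\pi:\mathcal R(\overline{\mathbb D^k})/\mathcal I\to\mathcal B(\mathcal H)$ for some Hilbert space $\mathcal H$. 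Composing with the quotient map identifies $\mathcal R(\Omega)$ (with the quotient norm) completely isometrically with a subalgebra of $\mathcal B(\mathcal H)$; in particular $\|F\|_{\mathcal T\ker\gamma^*}=\|[\pi(\gamma^*{}^{-1}F_{ij})]\|_{\mathcal B(\mathcal H^m)}$ for $F=[F_{ij}]\in\cM_m(\mathcal R(\Omega))$, where $\gamma^*{}^{-1}$ denotes any preimage.

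The remaining and crucial step is to recognize that the image of the generator under $\pi$ — more precisely, the operator $S:=\pi([z]_{\mathcal I})$ obtained by pushing the coordinate-free description of $\mathcal R(\Omega)$ as a quotient through $\pi$ — actually lies in the class $\mathcal T_\Omega$, so that evaluating $F$ at it is legitimate and bounded by $\|F\|_{\mathcal T_\Omega}$. Concretely: the identity function $\mathrm{id}_\Omega\in\mathcal R(\Omega)$ has preimages $\gamma_j^{-1}(z_j)\in\mathcal R(\overline{\mathbb D^k})$ under $\gamma^*$ (one for each coordinate, all congruent mod $\mathcal I$), and one checks that $S$, the common value of $\pi$ on these congruent elements, satisfies $\gamma_j(S)=\pi([\text{coordinate }z_j])$, which has norm $\le\|z_j\|_{\mathcal T_{\mathbb D^k}}=1$ for each $j$; the invertibility requirements \eqref{TAr1}--\eqref{TAr1b} follow because the relevant functions $z-\alpha_j$, $\e^{\ii\theta_j}z-(r_j+1)$ are invertible in $\mathcal R(\Omega)$, hence map to invertible operators under the homomorphism $\pi\circ(\text{quotient})$. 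Thus $S\in\mathcal T_\Omega$, and because $\pi\circ q$ (with $q$ the quotient map) is a completely isometric \emph{homomorphism} sending $\mathrm{id}_\Omega\mapsto S$, functional calculus gives $F(S)=[\,\pi q(F_{ij})\,]$, whence $\|F\|_{\mathcal T\ker\gamma^*}=\|F(S)\|_{\mathcal B(\mathcal H^m)}\le\sup_{T\in\mathcal T_\Omega}\|F(T)\|=\|F\|_{\mathcal T_\Omega}$.

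I expect the main obstacle to be this last identification: verifying carefully that the abstractly produced $\pi$ realizes $\mathcal R(\Omega)$ via point evaluation at a single operator $S$ that genuinely belongs to $\mathcal T_\Omega$ (rational functional calculus compatibility, the spectrum of $S$ sitting inside $\Omega$ so that $F(S)$ makes sense for every $F$ analytic near $\Omega$, and the two-sided-ideal/quotient bookkeeping so that $\|\cdot\|_{\mathcal T\ker\gamma^*}$ is exactly the quotient norm to which Theorem~\ref{blech} applies). The norm estimates themselves, once $S\in\mathcal T_\Omega$ is in hand, are immediate from the definitions.
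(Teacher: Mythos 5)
Your proposal is correct and follows essentially the same route as the paper: the easy inequality $\|F\|_{\mathcal T_\Omega}\le\|F\|_{\mathcal T\ker\gamma^*}$ via $\gamma(T)\in\mathcal T_{\mathbb D^k}$, and the reverse inequality by applying Theorem~\ref{blech} to $\mathcal R(\overline{\mathbb D^k})/\ker\gamma^*\cong\mathcal R(\Omega)$ with the quotient Agler norms, then checking that the image of the identity function under the resulting completely isometric homomorphism lies in $\mathcal T_\Omega$ (norm conditions from complete isometry together with $\|z_j\|_{\mathcal T_{\mathbb D^k}}=1$, invertibility conditions from the homomorphism property applied to the reciprocals, which are rational and pole-free on $\Omega$). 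The step you flag as the main obstacle is handled in the paper exactly as you sketch it, so there is no gap.
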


\begin{proof}
First we show that $ \|F\|_{\mathcal T\ker\gamma^*} \leq \|F\|_{\mathcal{T}_{\Omega }}$.
Due to Lemma~\ref{TRBS}we have that $\cM_n(\mathcal{R}(\overline{\mathbb{D}^k}))$ with the Agler norm structure
satisfies the Blecher-Ruan-Sinclair axioms.
By  Theorem~\ref{blech} there exists a completely isometric homomorphism
$\phi: \cM_n(\mathcal{R}(\Omega)) \to \cB(\mathcal{H})$, where $\cM_n(\mathcal{R}(\Omega))$ is equipped with
the system of quotient Agler norms $\|\cdot\|_{\mathcal T\ker\gamma^*}$.

Let $T: = \phi(z)$, we claim that $T$ belongs to $\mathcal T_\Omega$. Indeed, 
$$
(T-\alpha_jI_{\mathcal H})\phi( 1/(z-\alpha_j))=\phi(z-\alpha_j)\phi( 1/(z-\alpha_j))=\phi( 1)=I_{\mathcal H},
$$
for $j=k_1+1,\dots,k_2$, thus \eqref{TAr1} holds.
 Analogously, for $j=k_2+1,\dots,k$, 
 $$
\big(\e^{\ii\theta_j}T-(r_j-1)I_{\mathcal H}\big)\phi\big( 1/(\e^{\ii\theta_j}z-r_j+1)\big)=\phi\Big(\frac{\e^{\ii\theta_j}z-r_j+1}{\e^{\ii\theta_j}z-r_j+1}\Big)=
I_{\mathcal H},
$$
proving \eqref{TAr1b}.
To see \eqref{TAr2} observe that as $\phi$ is isometric we have
$$
\| \gamma_j(T)\|_{\mathcal{B}(\mathcal H)}= \| \phi (\gamma_j) \|_{\mathcal{B}(\mathcal H)} =\| \gamma_j \|_{\mathcal T\ker\gamma^*}\leq \| z_j\|_{\mathcal T_{\mathbb D^k}} = 1,
$$
where the  inequality follows from the fact that the function $Q(z)=z_j$ satisfies $Q(\gamma(z))=\gamma_j(z)$.

Now for any $F\in \cM_m(\mathcal{R}(\Omega) )$ we have 
$$
\|F\|_{\mathcal T\ker\gamma^*} = \|\phi(F)\|_{\mathcal{B}(\mathcal H)} = \|F(T)\|_{\mathcal{B}(\mathcal H)} \leq
\|F\|_{\mathcal T_\Omega}. 
$$
  
To show the inequality $ \|F\|_{\mathcal T\ker\gamma^*} \geq \|F\|_{\mathcal{T}_\Omega}$ take
$\varepsilon>0$ and $G\in \cM_m(\mathcal  R(\overline{\mathbb{D}^k }))$, such that 
\begin{equation}\label{FG}
\gamma^*(G) = F,\quad \|G\|_{\mathcal T_{\mathbb{D}^k}} < \|F\|_{\mathcal{T} \ker\gamma^*}+\varepsilon.
\end{equation}
Hence, for any $T\in\mathcal T_\Omega$ we have  $\gamma(T)\in\mathcal T_{\mathbb D^k}$ and 
$$
\|F(T)\|_{\mathcal B(\mathcal H)} = \|G(\gamma(T))\| \leq
	\|G\|_{\mathcal T_{\mathbb{D}^k}} <\|F\|_{\mathcal{T}\ker \gamma^*} +\varepsilon .
$$
Taking the supremum over $T \in \Omega$ and noting that $\varepsilon>0$ was arbitrary we obtain the claim. 
\end{proof}

\section{  Contractive realization on intersection of discs on the Riemann sphere} 
\label{s3}

We are ready to present our  results on contractive realization. The section contains three of them. 
First, we provide the central result regarding the contractive realization \eqref{real3}. Second, we show that any function having such realization is contractive in the Agler norm. We derive this argument for a very general, possibly infinite dimensional, class of realizations. 
Third, we employ the Bremehr's result, to replace the Agler norm in the assumption by the supremum norm.
Below we abbreviate $\mathbf P_\pm(z)\otimes I_m$ by $\mathbf P_\pm(z)_m $.

\begin{theorem} \label{mainT}
 Let $\Omega$  be the bounded intersection of discs on the Riemann sphere given by \eqref{Omegadef}. Then  for any  rational matrix-valued function $F\in \cM_n( \mathcal{R}(\Omega) )$ with   the Agler norm satisfying $\|F\|_{\mathcal T_\Omega}<1 $, there exists a positive integer $m$ and a 
contractive matrix
   {\tiny  $\begin{bmatrix} A & B \\ C & D \end{bmatrix}$} of size $(km+n) \times (km+n)$
    such that
    \begin{equation}\label{F-ABCD} F(z) = D +C\mathbf{P}_-(z)_{m}(\mathbf{P}_+(z)_{m}-A\mathbf{P}_-(z)_{m})^{-1}B, 
    \end{equation}
    where $\mathbf P_\pm(z)$ are defined by \eqref{PpD} and \eqref{PmD}.
    
   Furthermore, if $F\in \cM_n( \mathcal{R}(\Omega) )$
    has a realization \eqref{F-ABCD} with a contractive matrix
   {\tiny  $\begin{bmatrix} A & B \\ C & D \end{bmatrix}$} then 
   $$
   \|F\|_{\mathcal T_\Omega}\leq \norm{\begin{bmatrix} A & B \\ C & D \end{bmatrix} } .
   $$
\end{theorem}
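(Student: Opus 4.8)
The plan is to prove the two halves of Theorem~\ref{mainT} separately, but since the first (existence) half is the deep result relying on \cite{grinshpan2016matrixvaluedhermitianpositivstellensatzlurking} and Theorem~\ref{preptheoremD}, I will focus my sketch on the second (``furthermore'') half, which is the one whose proof I can reconstruct directly from the statement. Here the task is: given a contractive block matrix $\begin{bmatrix} A & B \\ C & D\end{bmatrix}$ and the realization formula \eqref{F-ABCD}, show that $\|F(T)\|\le 1$ (or rather $\le$ the norm of the block matrix) for every $T\in\mathcal T_\Omega$. The natural strategy is to mimic the computation that produces the multivariable Agler inequality \eqref{ineq}: substitute an operator $T$ for $z$ via the functional calculus, expand $I-F(T)^*F(T)$, and show it dominates a manifestly positive semidefinite quantity, using the contractivity of the block matrix together with $\mathbf P_+(T)^*\mathbf P_+(T)-\mathbf P_-(T)^*\mathbf P_-(T)\ge 0$ (which, by the Remark following \eqref{PTineq}, is exactly the content of $T\in\mathcal T_\Omega$).

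First I would set up notation: abbreviate $\mathbf P_\pm := \mathbf P_\pm(T)_m$ and $W := (\mathbf P_+ - A\mathbf P_-)^{-1}$, so that $F(T) = D + C\mathbf P_- W B$ (one must first check $\mathbf P_+ - A\mathbf P_-$ is invertible, which follows because $\mathbf P_+$ is invertible on a neighbourhood of $\Omega$ and $\|A\mathbf P_- \mathbf P_+^{-1}\|\le \|\gamma(T)\|\le 1$ — actually strict contractivity needs a small argument, or one uses that $\|F\|<1$ regime; more robustly, invertibility is part of what ``realization'' means and holds on $\Omega$ hence on the spectrum of $T$). Then I would write, with $M = \begin{bmatrix} A & B \\ C & D\end{bmatrix}$,
\begin{equation*}
\begin{bmatrix} \mathbf P_- W B \\ I\end{bmatrix}^* (I - M^*M) \begin{bmatrix} \mathbf P_- W B \\ I\end{bmatrix} \ge 0,
\end{equation*}
and expand both sides. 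Using $A\mathbf P_- W B + B = \mathbf P_+ W B$ (which is just $(\mathbf P_+ - A\mathbf P_-)WB = \mathbf P_+ WB \Leftrightarrow A\mathbf P_- WB + B = \mathbf P_+ WB$, true by definition of $W$) and $C\mathbf P_- W B + D = F(T)$, the left side rearranges into
\begin{equation*}
I - F(T)^*F(T) - B^*W^*(\mathbf P_+^*\mathbf P_+ - \mathbf P_-^*\mathbf P_-)W B \le I - F(T)^*F(T).
\end{equation*}
Combined with $\mathbf P_+^*\mathbf P_+ - \mathbf P_-^*\mathbf P_- \ge 0$ (tensoring \eqref{PTineq} with $I_m$ preserves positivity), this yields $I - F(T)^*F(T)\ge (1-\|M\|^2)(\cdots)$ — or cleanly, rescaling: if $\|M\|\le c$ then $cI - c^{-1}M^*M\ge 0$, run the same identity to get $cI - c^{-1}F(T)^*F(T)\ge \text{(positive)}$, hence $\|F(T)\|\le c$. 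Taking the supremum over $T\in\mathcal T_\Omega$ gives $\|F\|_{\mathcal T_\Omega}\le \|M\|$.

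For the \textbf{existence} direction I would proceed as the authors evidently intend: by Theorem~\ref{preptheoremD}, $\|F\|_{\mathcal T_\Omega} = \|F\|_{\mathcal T\ker\gamma^*} = \inf\{\|G\|_{\mathcal T_{\mathbb D^k}} : \gamma^*(G) = F\}$, so from $\|F\|_{\mathcal T_\Omega}<1$ pick $G\in\cM_n(\mathcal R(\overline{\mathbb D^k}))$ with $\gamma^*(G)=F$ and $\|G\|_{\mathcal T_{\mathbb D^k}}<1$; then invoke \cite[Theorem~3.4]{grinshpan2016matrixvaluedhermitianpositivstellensatzlurking} to get a contractive finite-dimensional realization of $G$ on the polydisk, i.e. $G(w) = D + C\mathbf P_-^{\mathbb D}(w)(I - A\mathbf P_-^{\mathbb D}(w))^{-1}B$ with $\mathbf P_-^{\mathbb D}(w) = \bigoplus_j w_j I_{m_j}$; finally substitute $w_j = \gamma_j(z) = (z-\alpha_j)r_j^{-1}$ etc.\ and manipulate $\mathbf P_-^{\mathbb D}(\gamma(z)) = \mathbf P_-(z)\mathbf P_+(z)^{-1}$ (possibly after padding the $\mathcal H_j$ to a common dimension $m$) to rewrite $I - A\mathbf P_-^{\mathbb D}(\gamma(z)) = \mathbf P_+(z)^{-1}(\mathbf P_+(z) - A\mathbf P_-(z))$, absorbing the leftover $\mathbf P_+(z)^{\pm1}$ factors into $C$ and $B$ while checking contractivity is preserved. \textbf{The main obstacle} is the bookkeeping in this last substitution: matching the block sizes $m_j$ of the polydisk realization to the uniform $m$ needed in \eqref{F-ABCD}, and verifying that conjugating/absorbing the $\mathbf P_+(z)$ factors keeps the $(km+n)\times(km+n)$ block matrix contractive rather than merely bounded — this is where the specific diagonal structure of $\mathbf P_\pm$ and the fact that $\gamma_j$ are Möbius maps onto $\overline{\mathbb D}$ must be used carefully.
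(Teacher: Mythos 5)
Your proposal follows essentially the same route as the paper: the existence half goes through Theorem~\ref{preptheoremD}, the polydisk realization of \cite[Theorem 3.4]{grinshpan2016matrixvaluedhermitianpositivstellensatzlurking}, padding to a common block size $m$, and the substitution $z_j=\gamma_j(z)$; and the ``furthermore'' half is exactly the content of Proposition~\ref{Jconminreal}, your quadratic-form identity
$I-F(T)^*F(T)=\begin{bmatrix} v\\ I\end{bmatrix}^{\!*}(I-M^*M)\begin{bmatrix} v\\ I\end{bmatrix}+B^*W^*(\mathbf P_+^*\mathbf P_+-\mathbf P_-^*\mathbf P_-)WB$ with $v=\mathbf P_-WB$ being a compact repackaging of the paper's longer expansion. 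The one step you single out as the ``main obstacle'' is in fact immediate: since $I-A\mathbf P_-(z)_m\mathbf P_+(z)_m^{-1}=\bigl(\mathbf P_+(z)_m-A\mathbf P_-(z)_m\bigr)\mathbf P_+(z)_m^{-1}$ (note the factorization is on the right, as $A$ need not commute with $\mathbf P_+$), the $\mathbf P_+^{\pm1}$ factors cancel exactly and the colligation $A,B,C,D$ is left unchanged --- no absorption into $B$ or $C$, hence no contractivity issue.
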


\begin{proof}

By Theorem~\ref{preptheoremD} and the definition of the norm $\|F\|_{\mathcal T\ker\gamma^*}$ there exists $G \in \cM_n(\mathcal{R}(\overline{\mathbb D^k}))$ such that $G(\gamma(z)) = F(z)$ and $\| G\|_{\mathcal T_{\mathbb{D}^k}} < 1$. 
By Theorem~3.4 from \cite{grinshpan2016matrixvaluedhermitianpositivstellensatzlurking} there exist $m_1,\dots, m_k\in \mathbb{Z}_+$ and a contractive colligation matrix {\tiny $\begin{bmatrix} A & B \\ C & D \end{bmatrix}$} of size $(\sum m_j+n) \times (\sum m_j+n)$ such that  
$$
G(z) = D + CZ(I - AZ)^{-1}B,
$$
where $Z = z_1I_{m_1} \oplus\cdots\oplus z_kI_{m_k}$. 
Appending rows and columns to the colligation matrix we may assume that $m_1=\cdots=m_k=:m$, hence $Z=\diag(z_1,\dots,z_k)\otimes I_m$.
Employing \eqref{PgammaOmega} we receive
\begin{eqnarray*} 
    F(z) &= &  G(\gamma(z)) \\
    &=&    D + C(\diag(\gamma(z))\otimes I_m))(I - A(\diag(\gamma(z))\otimes I_m ))^{-1}B\\
    &=& D + {C} \mathbf{P}_-(z)_{m}\mathbf{P}_+(z)_{m}^{-1}\left(I - {A} \mathbf{P}_-(z)_{m}\mathbf{P}_+(z)_{m}^{-1}\right)^{-1} {B} \\ 
    &=& D +C\mathbf{P}_-(z)_{m}(\mathbf{P}_+(z)_{m}-{A}\mathbf{P}_-(z)_{m})^{-1}{B}
    \end{eqnarray*}
    for all $z \in \Omega$, as desired.
    
    The second part of the statement follows from a more general fact, Proposition~\ref{Jconminreal} below.
\end{proof}

We next show that every function $F(z)$ with realization \eqref{real3}  (finite or infinite dimensional; one or many variables)
has Agler norm less or equal to one. The type of calculation the proof requires has appeared in special cases before.
For instance, when ${\tiny \begin{bmatrix} A & B \\ C & D \end{bmatrix}}$ is an isometry and ${\bf P}_+\equiv I$,
the calculation appears in \cite[Section 6.2]{AglerMcCarthy}. A more general case appears in \cite{Jackson}.

\begin{proposition} 
\label{Jconminreal} Suppose that ${{\Xi} }\subseteq\Comp^m$ is any domain   and that operator-valued polynomials
$\mathbf{P}_+(z)=\sum_{\alpha} {\bf P}_{+,\alpha}z^\alpha:{\mathcal H} \to {\mathcal H}$ and $\mathbf{P}_-(z)=\sum_{\alpha} {\bf P}_{-,\alpha}z^\alpha:{\mathcal H}\to {\mathcal K}$, $z\in \Xi$, 
 satisfy \begin{equation}
      \mathbf{P}_+(z)^*\mathbf{P}_+(z)-\mathbf{P}_-(z)^*\mathbf{P}_-(z)>0 , \quad z\in\Xi.\label{P*Pz}
\end{equation}
Then for any analytic ${\mathcal B}({\mathcal U}, {\mathcal Y})$-valued function $F(z)$ with realization \eqref{real3} with $A,B,C,D$
satisfying \begin{equation}\label{3} \begin{bmatrix} A & B \\ C & D \end{bmatrix} :\begin{matrix} {\mathcal K} \cr \oplus \cr {\mathcal U} \end{matrix} \to \begin{matrix} {\mathcal H} \cr \oplus \cr {\mathcal Y} \end{matrix} , \ \ \ \ \ \ 
 \left\| \begin{bmatrix} A & B \\ C & D \end{bmatrix} \right\| \le 1,
 \end{equation} and for any  tuple of commuting Hilbert space operators ${\bf T}$  with ${\bf P}_+({\bf T})$ invertible and 
 satisfying
 \begin{equation}\label{3a} \mathbf{P}_+({\bf T})^*\mathbf{P}_+({\bf T})-\mathbf{P}_-({\bf T})^*\mathbf{P}_-({\bf T})>0 ,\ \ \ \ \mathbf{P}_\pm({\bf T})=\sum_{\alpha} {\bf P}_{\pm,\alpha}\otimes T^\alpha ,\end{equation} the operator $F({\bf T})$ is well defined and 
 \begin{equation}\label{F<}
     \| F({\bf T}) \|\leq \left\| \begin{bmatrix} A & B \\ C & D \end{bmatrix} \right\| .
 \end{equation}
\end{proposition}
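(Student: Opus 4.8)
The plan is to run the standard "lurking colligation" state-space computation, adapted to the pencil $\mathbf P_{\pm}$ instead of $I$ and $zI$. Throughout, to keep the bookkeeping light, I would write $A,B,C,D$ also for the operators $A\otimes I,\dots,D\otimes I$ obtained after tensoring with the identity on the space $\mathcal N$ on which $\mathbf T$ acts; this changes neither the norm $\kappa:=\big\|{\tiny\begin{bmatrix}A&B\\ C&D\end{bmatrix}}\big\|\le 1$ nor any of the identities below, and it makes $\mathbf P_{\pm}(\mathbf T)=\sum_\alpha \mathbf P_{\pm,\alpha}\otimes T^\alpha$ interact with $A,B,C,D$ exactly as $\mathbf P_{\pm}(z)$ does with the scalar blocks in \eqref{real3}.

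First I would settle the assertion that $F(\mathbf T)$ is well defined, i.e.\ that $\mathbf P_+(\mathbf T)-A\mathbf P_-(\mathbf T)$ is invertible. Since $\mathbf P_+(\mathbf T)$ is invertible by hypothesis, factor
$$\mathbf P_+(\mathbf T)-A\mathbf P_-(\mathbf T)=\big(I-A\,\mathbf P_-(\mathbf T)\mathbf P_+(\mathbf T)^{-1}\big)\mathbf P_+(\mathbf T).$$
Substituting $g=\mathbf P_+(\mathbf T)h$ into \eqref{3a} shows $\big\|\mathbf P_-(\mathbf T)\mathbf P_+(\mathbf T)^{-1}\big\|<1$; combined with $\|A\|\le\kappa\le 1$, the middle factor is invertible by a Neumann series, hence so is $\mathbf P_+(\mathbf T)-A\mathbf P_-(\mathbf T)$ and $F(\mathbf T)$ is given by \eqref{real3}. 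I expect this to be the only genuinely delicate point: the estimate $\big\|\mathbf P_-(\mathbf T)\mathbf P_+(\mathbf T)^{-1}\big\|<1$ (and so the very meaningfulness of $\|F(\mathbf T)\|$) relies on reading the "$>0$" in \eqref{P*Pz}/\eqref{3a} as bounded-below positivity, which is automatic in finite dimensions but must be invoked here; the rest of the argument is purely formal.

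Next comes the key identity. Fix $u$ in the (input) space, set $x:=\big(\mathbf P_+(\mathbf T)-A\mathbf P_-(\mathbf T)\big)^{-1}Bu$ and $\xi:=\mathbf P_-(\mathbf T)x$. Then $A\xi+Bu=\mathbf P_+(\mathbf T)x$ directly from the definition of $x$, while $C\xi+Du=C\mathbf P_-(\mathbf T)x+Du=F(\mathbf T)u$ by \eqref{real3}; that is,
$$\begin{bmatrix} A & B \\ C & D \end{bmatrix}\begin{bmatrix}\xi\\ u\end{bmatrix}=\begin{bmatrix}\mathbf P_+(\mathbf T)x\\ F(\mathbf T)u\end{bmatrix}.$$

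Finally I would take norms and invoke contractivity: $\big\|\mathbf P_+(\mathbf T)x\big\|^2+\big\|F(\mathbf T)u\big\|^2\le\kappa^2\big(\big\|\mathbf P_-(\mathbf T)x\big\|^2+\|u\|^2\big)$, hence
$$\big\|F(\mathbf T)u\big\|^2\le\kappa^2\|u\|^2+(\kappa^2-1)\big\|\mathbf P_-(\mathbf T)x\big\|^2-\big\langle\big(\mathbf P_+(\mathbf T)^*\mathbf P_+(\mathbf T)-\mathbf P_-(\mathbf T)^*\mathbf P_-(\mathbf T)\big)x,x\big\rangle\le\kappa^2\|u\|^2,$$
where the last inequality uses $\kappa\le 1$ to discard the middle term and \eqref{3a} to discard the third. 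Taking the supremum over $u$ gives \eqref{F<}. The only steps needing care are the tensoring conventions (handled once at the outset) and the strict-positivity reading used for well-definedness; everything else is a one-line Pythagorean estimate.
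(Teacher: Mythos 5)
Your argument is correct, and the well-definedness step is essentially identical to the paper's: both factor $\mathbf P_+(\mathbf T)-A\mathbf P_-(\mathbf T)=(I-A\mathbf P_-(\mathbf T)\mathbf P_+(\mathbf T)^{-1})\mathbf P_+(\mathbf T)$ and use \eqref{3a} together with $\|A\|\le 1$ to get $\|A\mathbf P_-(\mathbf T)\mathbf P_+(\mathbf T)^{-1}\|<1$; your caveat about reading ``$>0$'' as bounded-below positivity applies equally to the paper's step ``$I> Z_+^{*-1}Z_-^*A_e^*A_eZ_-Z_+^{-1}$, thus $\|A_eZ_-Z_+^{-1}\|<1$'', so you are not assuming anything the paper does not. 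Where you diverge is the norm bound: the paper introduces the defect operator $\begin{bmatrix} P&Q\\ Q^*&R\end{bmatrix}=I-{\tiny\begin{bmatrix}A&B\\ C&D\end{bmatrix}}^*{\tiny\begin{bmatrix}A&B\\ C&D\end{bmatrix}}\otimes I$ and carries out a page-long expansion showing that $I-F(\mathbf T)^*F(\mathbf T)$ equals a congruence of this defect plus the positive term $B_e^*(Z_+^*-Z_-^*A_e^*)^{-1}(Z_+^*Z_+-Z_-^*Z_-)(Z_+-A_eZ_-)^{-1}B_e$, and only then takes quadratic forms. Your energy-balance argument --- feeding $(\xi,u)=(\mathbf P_-(\mathbf T)x,u)$ with $x=(\mathbf P_+(\mathbf T)-A\mathbf P_-(\mathbf T))^{-1}Bu$ into the colligation, observing the output is $(\mathbf P_+(\mathbf T)x,F(\mathbf T)u)$, and applying contractivity once --- reaches \eqref{F<} in three lines and is, I checked, entirely correct (the spaces match up after tensoring, and the final display is exactly the rearrangement of $\kappa^2\|\mathbf P_-(\mathbf T)x\|^2-\|\mathbf P_+(\mathbf T)x\|^2$). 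The trade-off is that the paper's longer computation yields the full operator identity for $I-F(\mathbf T)^*F(\mathbf T)$ as a sum of two positive operators (the operator analogue of \eqref{ineq}), which is stronger information than the norm inequality alone; your version extracts precisely the quadratic-form consequence of that identity on the range of $B$, which is all that \eqref{F<} requires.
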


\begin{proof} Note that if ${\bf T}\in {\mathcal B}({\mathcal L})^m$, we have that 
$$
F({\bf T}) = D_e +C_e{\bf P}_-({\bf T}) ({\bf P}_+({\bf T}) -A_e{\bf P}_- ({\bf T}))^{-1} B_e \in {\mathcal B}({\mathcal U}\otimes {\mathcal L}, {\mathcal Y}\otimes{\mathcal L}), 
$$
where
$$
A_e=A \otimes I_{{\mathcal L}},\ B_e=B\otimes I_{\mathcal L},\ C_e=C\otimes I_{\mathcal L} ,\ D_e=D\otimes I_{\mathcal L}.
$$
For convenience, we will use the notation $Z_\pm=\mathbf{P}_\pm({\bf T})$. Let us check the well-definedness of $F({\bf T})$.
Since $A_e$ is a contraction, we have that 
$$  Z_+^* Z_+ >Z_-^* Z_-  \ge Z_-^* A_e^*A_eZ_- , $$
which implies that 
$$ I > Z_+ ^{*-1}  Z_-^* A_e^*A_eZ_-  Z_+ ^{-1}. $$
Thus $ \|  A_e Z_- Z_+ ^{-1}  \| <1$, giving that 
$$ Z_+ -A_eZ_-  = (I-A_eZ_- Z_+ ^{-1}) Z_+ $$
is invertible. Thus $F({\bf T})$ is well defined.

In order to prove \eqref{F<}, we write due to \eqref{3}, 
\[
\begin{bmatrix} P&Q\\Q^*&R\end{bmatrix} := \left( \begin{bmatrix} I & \\
& I\end{bmatrix} - \begin{bmatrix}A^*&C^*\\B^*&D^*\end{bmatrix}\begin{bmatrix}A&B\\C&D\end{bmatrix} \right)
\otimes I_{\mathcal L} \ge \varepsilon I \geq 0,
\]
where $\varepsilon:=1-{\tiny \norm {  \begin{bmatrix} A & B \\ C & D \end{bmatrix} }}^2\geq0 $,
obtaining the identities
\begin{eqnarray*}
 P &=& I- A_e^*A_e - C_e^*C_e,\\
 Q &=& -A_e^*B_e - C_e^*D_e,\\
 R &=& I - B_e^*B_e - D_e^*D_e.
\end{eqnarray*}
Using the above observations, we now have
\begin{align*}
I - F(\mathbf T)^*F(\mathbf T) =& I - D_e^*D_e - B_e^*(Z_+^*-Z_-^*A_e^*)^{-1}Z_-^*C_e^*D_e \\ 
&- D_e^*C_eZ_-(Z_+-A_eZ_-)^{-1}B_e \\
&- B_e^*(Z_+^*-Z_-^*A_e^*)^{-1}Z_-^*C_e^*C_eZ_-(Z_+-A_eZ_-)^{-1}B_e
\end{align*}
\begin{align*}
=& R+B_e^*B_e+ B_e^*(Z_+^*-Z_-^*A_e^*)^{-1}Z_-^*(Q+A_e^*B_e) \\
&+ (Q^*+B_e^*A_e)Z_-(Z_+-A_eZ_-)^{-1}B_e\\ 
&+ B_e^*(Z_+^*-Z_-^*A_e^*)^{-1}Z_-^*(P-I+A_e^*A_e)Z_-(Z_+-A_eZ_-)^{-1}B_e
\end{align*}
\begin{align*}
=& R+B_e^*(Z_+^*-Z_-^*A_e^*)^{-1}Z_-^*Q + Q^*Z_-(Z_+-A_eZ_-)^{-1}B_e  \\
&+ B_e^*(Z_+^*-Z_-^*A_e^*)^{-1}Z_-^*PZ_-(Z_+-A_eZ_-)^{-1}B_e \\ 
&+ B_e^*B_e+ B_e^*(Z_+^*-Z_-^*A_e^*)^{-1}Z_-^*A_e^*B_e + B_e^*A_eZ_-(Z_+-A_eZ_-)^{-1}B_e \\
&+ B_e^*(Z_+^*-Z_-^*A_e^*)^{-1}Z_-^*(A_e^*A_e - I)Z_-(Z_+-A_eZ_-)^{-1}B_e,
\\
 =& \begin{bmatrix} B_e^*(Z_+^*-Z_-^*A_e^*)^{-1}Z_-^*
 & I\end{bmatrix}\begin{bmatrix} P&Q\\Q^*&R\end{bmatrix}\begin{bmatrix}Z_-(Z_+-A_eZ_-)^{-1}B_e \\ I\end{bmatrix} \\
&  +B_e^*(Z_+^*-Z_-^*A_e^*)^{-1}\Big[ (Z_+^*-Z_-^*A_e^*)(Z_+-A_eZ_-)   + Z_-^*A_e^*(Z_+-A_eZ_-) \\
&+ (Z_+^*-Z_-^*A_e^*)A_e Z_- +Z_-^* A_e^* A_e Z_- - Z_-^*Z_- \Big](Z_+-A_eZ_-)^{-1}B_e
\\
 =& \begin{bmatrix} B_e^*(Z_+^*-Z_-^*A_e^*)^{-1}Z_-^*
 & I\end{bmatrix}\begin{bmatrix} P&Q\\Q^*&R\end{bmatrix}\begin{bmatrix}Z_-(Z_+-A_eZ_-)^{-1}B_e \\ I\end{bmatrix} \\
 & +  B_e^*(Z_+^*-Z_-^*A_e^*)^{-1}\Big[ Z_+^*Z_+-Z_-^*Z_-\Big](Z_+-A_eZ_-)^{-1}B_e.
\end{align*}
As $Z_+^*Z_+ - Z_-^* Z_-$ is positive definite, we recieve
\begin{eqnarray*}
\langle (I & - & F(\mathbf T)^*F(\mathbf T))x,x\rangle  \\
&\geq& 
 \left\langle \begin{bmatrix} P&Q\\Q^*&R\end{bmatrix}\begin{bmatrix}Z_-(Z_+-A_eZ_-)^{-1}B_ex \\ x\end{bmatrix},
 \begin{bmatrix}Z_-(Z_+-A_eZ_-)^{-1}B_ex \\ x\end{bmatrix}  \right\rangle\\
 & \geq&  \varepsilon \left\langle \begin{bmatrix}Z_-(Z_+-A_eZ_-)^{-1}B_ex \\ x\end{bmatrix},
 \begin{bmatrix}Z_-(Z_+-A_eZ_-)^{-1}B_ex \\ x\end{bmatrix}  \right\rangle  \\ 
 & \geq& \varepsilon \norm x^2
,\quad x\in\mathcal U.  
\end{eqnarray*}
Hence,
\[
\norm{   F(\mathbf T)x} ^2 \leq (1-\varepsilon) \norm {x}^2
= {\tiny \norm {  \begin{bmatrix} A & B \\ C & D \end{bmatrix} }}^2\norm {x}^2,\quad x\in\mathcal U.
\]
\end{proof}

The estimates of the Agler norm from Proposition \ref{basicAglerD} are clearly not optimal.
Therefore, in Theorem~\ref{thm-brehmer} below we assume that $F$ is such that $\gamma^*(G)=F$ for
some $G$ of supremum norm one. Note this implies that the supremum norm
of $F$ is less or equal to one, but the Agler not necessarily (unless $k=2$). Hence, the
assumptions on $F$ are essentially weaker than in Theorem~\ref{mainT}.
The price for this is a realisation that  takes place on a subset $\check\Omega$ of $\Omega$.
Namely, for $\Omega$, $\mathbf P_{\pm}$, $\gamma$ defined in the usual way by \eqref{Omegadef}--\eqref{PgammaOmega} 
we define 
\begin{equation}\label{checks}
 \check \gamma= (k-1)^{1/2}\gamma,\     \check\Omega:=\check\gamma^{-1}(\overline{\mathbb D^k}\big)\subseteq \Omega,\ \mathbf{\check P}_+:=(k-1)^{-1/2}\Pp, \text{  and }\mathbf{\check P}_-:=\Pm.
\end{equation}
Note that when passing from $\Omega$ to $\check\Omega$, each disc $\Omega_j$ for $j=1,\ldots,k_1$
is scaled by factor $1/\sqrt{k-1}$, and each hole ($\Omega_j$ for $j=k_1+1,\ldots,k_2$) is upscaled by $\sqrt{k-1}$.
The least intuitive is the difference introduced by the half-planes $\Omega_j$ for $j=k_2+1,\ldots,k$,
since the corresponding ``compounds'' of $\check\Omega$ become discs.
In Figure~\ref{fig:omega2} we present an example of $\Omega$ and $\check\Omega$ where
 $k=3$, $k_1=1$, $k_2=2$, $\alpha_1=0$, $\alpha_2=1/2$, $r_1=r_3=1$, $r_2=1/4$ and $\theta_3=0$.
In the second example $\check\Omega$ no longer has a hole, while $\Omega$ has three.
It may actually happen that $\check\Omega=\emptyset$ for a nonempty $\Omega$.
\begin{figure}[ht]
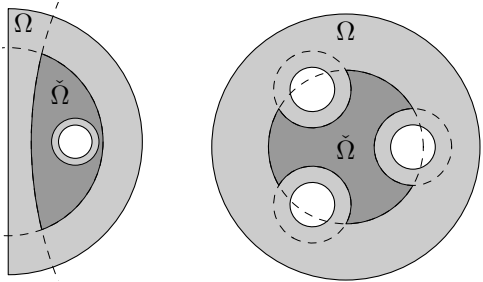

\centering \includegraphics{annulus_03}\qquad \includegraphics{annulus_04}

\caption{Examples of a set $\Omega$ and the corresponding $\check\Omega$ for $k=3,4$.}\label{fig:omega2}
\end{figure}

\begin{theorem}\label{thm-brehmer}
Let $\Omega$  be the bounded intersection of discs on the Riemann sphere given by \eqref{Omegadef}.
Assume $F\in \cM_n( \mathcal{R}(\Omega) )$ is such that there exists $G\in \cM_n(\mathcal R(\overline{ \mathbb{D}^k }))$
with $\gamma^*(G)=F$, and $\sup_{z\in \overline{ \mathbb{D}^k }}  \norm{G(z)}\leq 1$.
Then there exists a positive integer $m$ and a contractive matrix
{\tiny    $\begin{bmatrix} A & B \\ C & D \end{bmatrix}$}
of size $(km+n) \times (km+n)$ such that
\begin{equation}\nonumber
    F(z) = D +C\mathbf{\check P}_-(z)_{m}\bigl(
    \mathbf{\check P}_+(z)_{m}-A\mathbf{\check P}_-(z)_{m}\bigr)^{-1}B, \quad z\in \check\Omega,
    \end{equation}
    where $\check\Omega$ and $\check{\mathbf P}_{\pm}$ are defined by \eqref{checks}.  In particular, 
    $$
    \norm{F}_{\mathcal T_{\check\Omega}}\leq 1.
    $$
\end{theorem}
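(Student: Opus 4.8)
The plan is to follow the proof of Theorem~\ref{mainT} almost verbatim, the single new ingredient being a rescaling by $\sqrt{k-1}$; this scaling, which produces the objects $\check\Omega$, $\mathbf{\check P}_{\pm}$, $\check\gamma$ of \eqref{checks}, is precisely what Brehmer's inequality provides in exchange for replacing the Agler-norm hypothesis of Theorem~\ref{mainT} by the weaker supremum-norm hypothesis here. Throughout one may assume $k\ge2$; for $k=1$ the set $\Omega$ is a single disk, the scaled quantities in \eqref{checks} are undefined, and the realization is the classical one.

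First I would rescale the given $G$: put $\hat G(w):=G(w/\sqrt{k-1})$, which, like $G$, is analytic on a neighbourhood of $\overline{\mathbb D^k}$. By Brehmer's result -- that for commuting Hilbert space contractions $T_1,\dots,T_k$ the closed polydisk $\overline{\mathbb D^k}$ is a complete spectral set for the tuple $\bigl((k-1)^{-1/2}T_1,\dots,(k-1)^{-1/2}T_k\bigr)$ -- the hypothesis $\sup_{z\in\overline{\mathbb D^k}}\|G(z)\|\le1$ gives, for every tuple $\mathbf T=(T_1,\dots,T_k)$ of commuting contractions,
\[
\|\hat G(\mathbf T)\|=\bigl\|G\bigl((k-1)^{-1/2}\mathbf T\bigr)\bigr\|\le\sup_{z\in\overline{\mathbb D^k}}\|G(z)\|\le1,
\]
that is, $\|\hat G\|_{\mathcal T_{\mathbb D^k}}\le1$. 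This is the only place where the supremum-norm hypothesis is used, and it is what forces the passage from $\Omega$ to $\check\Omega$.

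Next I would run the realization argument of Theorem~\ref{mainT} with $\hat G$ in place of the $G$ produced there. Theorem~3.4 of \cite{grinshpan2016matrixvaluedhermitianpositivstellensatzlurking}, applied to $\hat G$, yields $m_1,\dots,m_k\in\IZ_+$ and a contractive colligation {\tiny $\begin{bmatrix}A&B\\C&D\end{bmatrix}$} with $\hat G(w)=D+CW(I-AW)^{-1}B$, $W=w_1I_{m_1}\oplus\cdots\oplus w_kI_{m_k}$, for $w\in\overline{\mathbb D^k}$; appending zero rows and columns (as in the proof of Theorem~\ref{mainT}) we may take $m_1=\cdots=m_k=:m$, which changes neither the colligation norm nor the transfer function. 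For $z\in\check\Omega$ one has $\check\gamma(z)\in\overline{\mathbb D^k}$ by the definition of $\check\Omega$, and, by \eqref{PgammaOmega} and \eqref{checks},
\[
\mathbf{\check P}_-(z)\mathbf{\check P}_+(z)^{-1}=\sqrt{k-1}\,\mathbf P_-(z)\mathbf P_+(z)^{-1}=\sqrt{k-1}\,\gamma(z)=\check\gamma(z),
\]
while $\mathbf{\check P}_+(z)=(k-1)^{-1/2}\mathbf P_+(z)$ is invertible on $\check\Omega\subseteq\Omega$; substituting $w=\check\gamma(z)$ therefore turns $W$ into $\mathbf{\check P}_-(z)_m\mathbf{\check P}_+(z)_m^{-1}$, so that
\[
F(z)=G(\gamma(z))=\hat G(\check\gamma(z))=D+C\,\mathbf{\check P}_-(z)_m\mathbf{\check P}_+(z)_m^{-1}\bigl(I-A\,\mathbf{\check P}_-(z)_m\mathbf{\check P}_+(z)_m^{-1}\bigr)^{-1}B,
\]
which, by the algebraic identity already used in the proof of Theorem~\ref{mainT}, equals $D+C\,\mathbf{\check P}_-(z)_m\bigl(\mathbf{\check P}_+(z)_m-A\,\mathbf{\check P}_-(z)_m\bigr)^{-1}B$, the asserted realization on $\check\Omega$.

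It remains to deduce $\|F\|_{\mathcal T_{\check\Omega}}\le1$. First one observes that $\check\Omega$ is again a bounded intersection of disks on the Riemann sphere -- the half-plane constituents of $\Omega$ now yielding disk constituents -- whose governing polynomials may be taken to be $\mathbf{\check P}_{\pm}$, so that $\mathcal T_{\check\Omega}$ is defined and consists of the operators $T$ with $\|\check\gamma_j(T)\|\le1$, $j=1,\dots,k$. Hence $\check\gamma(T)=(\check\gamma_1(T),\dots,\check\gamma_k(T))$ is a tuple of commuting contractions, and
\[
\|F(T)\|=\|\hat G(\check\gamma(T))\|\le\|\hat G\|_{\mathcal T_{\mathbb D^k}}\le1;
\]
taking the supremum over $T$ gives the claim. (Equivalently, one may invoke Proposition~\ref{Jconminreal} with $\Xi$ the interior of $\check\Omega$ and $\mathbf{\check P}_{\pm}$ in place of $\mathbf P_{\pm}$, since $\mathbf{\check P}_+(z)^*\mathbf{\check P}_+(z)-\mathbf{\check P}_-(z)^*\mathbf{\check P}_-(z)=\mathbf{\check P}_+(z)^*\bigl(I-\diag(|\check\gamma_j(z)|^2)\bigr)\mathbf{\check P}_+(z)$ is positive definite there.) The one genuine obstacle I foresee is the Brehmer step: identifying $\sqrt{k-1}$ as the exact scaling constant and using Brehmer's theorem in its completely bounded (matrix-valued) form. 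Everything after that is a rescaled copy of the proof of Theorem~\ref{mainT}, together with the routine verification that $\check\Omega$ is a domain of the type \eqref{Omegadef} with associated polynomials $\mathbf{\check P}_{\pm}$.
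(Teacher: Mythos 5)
Your proof is correct and follows essentially the same route as the paper's: you rescale $G$ to $\hat G(w)=G(w/\sqrt{k-1})$ (the paper's $G_1$), invoke Brehmer's inequality (in the Knese-improved, matrix-valued form) to convert the supremum-norm hypothesis into $\|\hat G\|_{\mathcal T_{\mathbb D^k}}\le1$, and then rerun the realization argument of Theorem~\ref{mainT} with $\check\gamma$, $\mathbf{\check P}_{\pm}$ in place of $\gamma$, $\mathbf P_{\pm}$, finishing with Proposition~\ref{Jconminreal}. The extra remarks on the $k=1$ case and on $\check\Omega$ being again a domain of type \eqref{Omegadef} are sensible but not needed beyond what the paper records.
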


\begin{proof}
Consider the function 
$$
G_1 (z_1,\ldots,z_k):=G\big( (k-1)^{-1/2}z_1,\ldots,(k-1)^{-1/2}z_k\big)
$$
and observe it satisfies $G_1(\check\gamma(z))=F(z)$ and $\sup_{z\in (k-1)^{1/2}\overline{\mathbb D^k} }\norm{G_1(z)}\leq 1$.
By the celebrated result of Brehmer \cite{Breh61} improved very recently by Knese \cite{knese2024three},
we have $\norm{G_1}_{\cT_{\mathbb D^k}}\leq 1$.
Applying Theorem~3.4 from \cite{grinshpan2016matrixvaluedhermitianpositivstellensatzlurking},
in the same way as in the proof of Theorem~\ref{mainT},
we obtain $m\in \mathbb{Z}_+$ and a contractive colligation matrix 
{\tiny $\begin{bmatrix} A & B \\ C & D \end{bmatrix}$} of size $(mk+n) \times (mk+n)$ such that  
$$
G_1(z) = D + CZ(I - AZ)^{-1}B,
$$
with $Z = \diag(z)\otimes I_m$.
Then, for $z\in \check\Omega$, we have
\begin{align*} 
    F(z) &=   G_1(\check\gamma(z)) \\
    &=  D + C(\diag(\check\gamma(z))\otimes I_m)
    \left[I {-} A\bigl(\diag(\check\gamma(z)\bigr)\otimes I_m )\right]^{-1}\!\!B\\
    &= D + C\mathbf{\check P}_-(z)_{m}\mathbf{\check P}_+(z)_{m}^{-1}
    \left(I - A \mathbf{\check P}_-(z)_{m}\mathbf{\check P}_+(z)_{m}^{-1}\right)^{-1}\!B \\ 
    &
    = D+ C\mathbf{\check P}_-(z)_{m}\left(\mathbf{\check P}_+(z)_{m}-A\mathbf{\check P}_-(z)_{m}\right)^{-1}\!\!B,
\end{align*} 
and the proof of realisation formula is complete. The second statement follows directly from Proposition~\ref{Jconminreal}.
\end{proof}


\section{Agler norms with respect to open domains}
\label{sec:domains}
 While we have so far focused on the closed domain $\Omega$,  in the literature
open domains are commonly considered (e.g.\ \cite{MittalPaulsen}), especially
upon studying algebras of bounded analytic functions $H^\infty(\Omega)$.
In this section we discuss the Agler norm with respect to the open domain.
Note that there exist different definitions of underlying classes of operators (see e.g.\ \cite[Subsection 4.2.2]{Mittal}).

Consider the class $\cT_\Omega^\circ$ of Hilbert space operators satisfying \eqref{TAr1}, \eqref{TAr1b}, and the strong inequality in \eqref{TAr2}, 
equivalently \eqref{TAr1}, \eqref{TAr1b}, and
\begin{equation}\label{TArstrong}
\Pp(T)^*\Pp(T) - \Pm(T)^*\Pm(T) > 0.
\end{equation}
Based on this class we define the corresponding Agler norm
\begin{equation}\label{Aglercirc}
    \norm{F}_{\cT_\Omega^\circ}:=\sup_{A\in \cT_\Omega^\circ}\norm{F(A)},\quad F\in \cM_n(\mathcal R(\Omega)).
\end{equation}
In this section we provide sufficient conditions on the set $\Omega$ under which the norms 
$\norm{\cdot}_{\cT_\Omega}$ and $\norm{\cdot}_{\cT_\Omega^\circ}$ coincide.
\footnote{This issue appeared in the multivariable case in \cite{grinshpan2016matrixvaluedhermitianpositivstellensatzlurking},
where one of the assumptions was that elements in $\cT_\Omega$ can be approximated in norm by elements in $\cT_\Omega^\circ$.
When ${\bf P}_+\equiv I$ and ${\bf P}_-(z)$ is linear in $z$, which were the applications in
\cite{grinshpan2016matrixvaluedhermitianpositivstellensatzlurking}, this norm approximation was easily established
by using $\lim_{r\to 1-} r{{\bf T}}={\bf T}$.}

\begin{proposition}\label{Agler<}
Let $\Omega$  be the bounded  intersection of discs on the Riemann sphere given by \eqref{Omegadef}.
Then $\norm{\cdot}_{\cT_\Omega}= \norm{\cdot}_{\cT_\Omega^\circ}$, provided one of the following conditions holds:
\begin{enumerate}[\rm(i)]
\item $\operatorname{Int}(\Omega)\neq\emptyset$ and $k_1=k_2$ $($i.e.\ $\Omega$ is convex$)$,
\item $k_1=1$, $k_2=k\geq 2$ and
\begin{align}
|\alpha_j|+r_j< r_1 &\qquad (j=2,\ldots,k),
\label{holes-inside}\\
|\alpha_j|^2-r_j^2 = |\alpha_2|^2-r_2^2>0 &\qquad (j=3,\ldots,k).
\label{holes-ring}
\end{align}
\end{enumerate}
\end{proposition}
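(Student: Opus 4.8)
\textbf{Proof strategy for Proposition~\ref{Agler<}.}

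The plan is to prove the nontrivial inequality $\norm{F}_{\cT_\Omega^\circ}\geq\norm{F}_{\cT_\Omega}$ (the reverse is immediate since $\cT_\Omega^\circ\subseteq\cT_\Omega$) by showing that every $T\in\cT_\Omega$ can be approximated in operator norm by a sequence $T_\varepsilon\in\cT_\Omega^\circ$, so that $\norm{F(T)}=\lim_\varepsilon\norm{F(T_\varepsilon)}\leq\norm{F}_{\cT_\Omega^\circ}$ by continuity of the rational functional calculus (valid because the spectrum of $T$ is in the compact $\Omega$ by Proposition~\ref{basicAglerD}, and nearby operators keep their spectra in a fixed compact neighbourhood of $\Omega$ on which $F$ is analytic). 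The whole problem thus reduces to the approximation claim, and the two cases (i), (ii) correspond to two different ways of producing such an approximation.

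In case (i), $\Omega$ is a bounded intersection of closed discs and closed half-planes with no ``holes'', hence convex and compact; the natural candidate is $T_\varepsilon=c\in(0,1)$ close to $1$, i.e.\ a convex contraction of $T$ toward an interior point. Fix $w_0\in\operatorname{Int}(\Omega)$ and set $T_\varepsilon=(1-\varepsilon)T+\varepsilon w_0 I$. Because each defining region $\Omega_j$ (a closed disc for $j\leq k_1$, a closed half-plane for $j>k_2$) is convex and contains $w_0$ in its interior, one checks that $\Omega_j$ is a spectral set for $T$ implies that $\gamma_j(T_\varepsilon)$ is a \emph{strict} contraction: concretely $\Omega_j$, being convex with $w_0$ interior, satisfies $(1-\varepsilon)\Omega_j+\varepsilon w_0\subseteq\operatorname{Int}(\Omega_j)$, and by the von Neumann-type inequality for the disc/half-plane applied to the Möbius/affine map $\gamma_j$ this translates into $\norm{\gamma_j(T_\varepsilon)}<1$; the invertibility conditions \eqref{TAr1}, \eqref{TAr1b} persist for small $\varepsilon$ since they are open conditions. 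Hence $T_\varepsilon\in\cT_\Omega^\circ$ and $T_\varepsilon\to T$. The main technical point to get right here is that contracting each disc/half-plane toward a common interior point strictly shrinks it, uniformly in a way that survives applying the functional calculus to an operator whose spectrum lies in $\Omega_j$.

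Case (ii) is the genuinely new situation: $\Omega$ is a disc $\Omega_1$ with $k-1$ disjoint holes arranged so that, by \eqref{holes-ring}, the Möbius maps $\gamma_2,\ldots,\gamma_k$ associated with the holes are all ``co-centered'' (the condition $|\alpha_j|^2-r_j^2=\text{const}$ is exactly what makes the quantities $\gamma_j$ comparable). Here contracting toward a point will not work because a hole must stay a hole. Instead the plan is to use the structure \eqref{PgammaOmega}: $\gamma_1(z)$ is a Möbius map of $\Omega_1$ onto $\overline{\mathbb D}$ and, after a normalization, we may take $\Omega_1=\overline{\mathbb D}$ and $0\in$ each hole's exterior. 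The idea is to perturb $T$ to $T_\varepsilon$ so that $\norm{\gamma_1(T_\varepsilon)}<1$ \emph{and} each $\norm{\gamma_j(T_\varepsilon)}<1$ for $j\geq2$ simultaneously; one candidate is a two-sided ``annular'' contraction, e.g.\ replacing $T$ by $\rho_\varepsilon(T)$ where $\rho_\varepsilon$ is a rational self-map of the multiply-connected $\Omega$ into its interior that exists precisely because of the commensurability \eqref{holes-ring} (all the holes scale together). Concretely, because \eqref{holes-ring} forces all $\gamma_j$ ($j\geq2$) to be, up to post-composition with a rotation, the \emph{same} Möbius map $g$, the single scalar $g(T)$ controls all the hole-constraints at once, and \eqref{holes-inside} guarantees the holes sit strictly inside $\Omega_1$ so there is room to push $g(T)$ slightly outward while pulling $\gamma_1(T)$ slightly inward; making this precise by writing down the perturbation and verifying \eqref{TArstrong} plus the invertibility clauses is where the real work lies.

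I expect the main obstacle to be case (ii): identifying the correct explicit perturbation $T\mapsto T_\varepsilon$ that strictly satisfies \emph{all} the constraints \eqref{TAr1}, \eqref{TAr1b}, \eqref{TArstrong} at once — a single scalar contraction of $T$ cannot do it, and one must exploit \eqref{holes-ring} (commensurate holes) and \eqref{holes-inside} (holes interior to $\Omega_1$) to produce a self-map of $\Omega$ into its interior. Once such a map is in hand, applying it via the Riesz functional calculus and passing to the limit is routine, and the conclusion $\norm{F}_{\cT_\Omega^\circ}=\norm{F}_{\cT_\Omega}$ follows as in case (i).
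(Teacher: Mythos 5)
Your reduction to a density statement and your treatment of case (i) match the paper: one takes $T_\varepsilon=(1-\varepsilon)T+\varepsilon w_0 I$ with $w_0\in\operatorname{Int}\Omega$ and verifies the strict inequalities directly (affinely for the discs, via $\real(\e^{\ii\theta_j}T_\varepsilon)$ for the half-planes), then passes to the limit using continuity of the rational calculus.

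Case (ii), however, contains a genuine gap, and the route you sketch cannot be completed. First, a rational (or even holomorphic) map $\rho_\varepsilon$ of the multiply connected $\Omega$ into a compact subset of its interior that is close to the identity does not exist: such a map would be essential on the fundamental group, and by the Schwarz--Pick lemma for the hyperbolic metric (compare core geodesic lengths, equivalently moduli of annuli) an essential holomorphic map cannot send an annulus into a conformally thinner one; so this obstruction already kills the strategy for $k=2$. Second, even if such a map existed, applying it through the Riesz functional calculus would only control the spectrum of $\rho_\varepsilon(T)$, not the operator norms $\norm{\gamma_j(\rho_\varepsilon(T))}$ required for \eqref{TArstrong}; spectral inclusion in $\operatorname{Int}\Omega_j$ does not imply $\norm{\gamma_j(\cdot)}<1$. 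Third, your claim that \eqref{holes-ring} forces all $\gamma_j$, $j\ge 2$, to coincide up to a rotation is false: the holes may have distinct centers and radii, and only the scalar $|\alpha_j|^2-r_j^2$ is shared. The paper's actual perturbation is not a function of $T$ at all: it uses the polar decomposition $T=U|T|$ and splits $\cH$ into three spectral bands of $|T|$, namely $[0,d_0]$, $(d_0,d_1]$, $(d_1,r_1]$, where $d_0=\sqrt{|\alpha_j|^2-r_j^2}$ and $d_1$ lies strictly between $\max_j(|\alpha_j|+r_j)$ and $r_1$, and then rescales $T$ by $1-\varepsilon$, $1+\varepsilon$, $1-\varepsilon$ on the respective bands. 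The role of \eqref{holes-ring} is that the constraint $\norm{Tx-\alpha_j x}\ge r_j$ rewrites as $2\real\scp{Tx,\alpha_j x}-\norm{Tx}^2\le d_0^2$ uniformly in $j$, which is what makes the band-by-band verification of strictness work. Without an operator-level, non-holomorphic perturbation of this kind, your argument for case (ii) does not go through.
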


While (i) covers all possible (non-degenerate) convex sets $\Omega$,
(ii) resolves some special cases of a multi-holed disc (cf.\ Corollary~\ref{cor:decentered}).
The condition \eqref{holes-ring} says that  the half-lines
originating at zero are tangent to the holes at the same distance from zero, i.e.\ $d_0:=\sqrt{|\alpha_j|^2-r_j^2}$
(cf.\ Figure~\ref{fig:multihole}). Cases (ii) and (iii) cover the annuli
with arbitrary size and position of the hole.
\begin{figure}[ht]
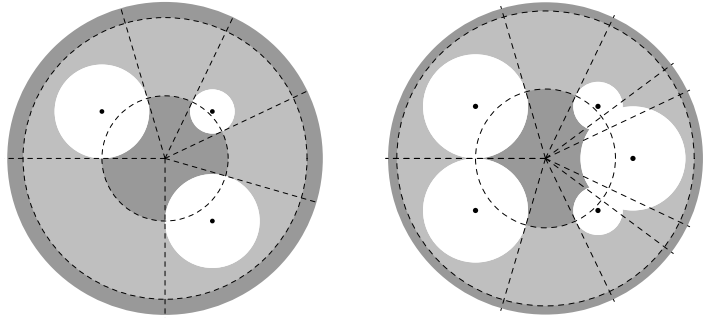

    \centering
    \includegraphics[height=11em]{annulus_01}\qquad
    \includegraphics[height=11em]{annulus_02}
\caption{Examples of ``multi-holed discs''. Note that we do not require the holes to
be disjoint.}

\label{fig:multihole}
\end{figure}

\begin{proof}
To prove the claim it is enough to show that for a fixed Hilbert space $\cH$
the set of $T\in\mathbf B(\cH)$  satisfying \eqref{TAr1}, \eqref{TAr1b} and
\eqref{TArstrong} is dense in the set of $T\in\mathbf B(\cH)$ satisfying \eqref{TAr1}, \eqref{TAr1b}, and  \eqref{TAr2}.
Having this, we infer, e.g.\ using the Cauchy integral formula (cf.~the proof of Lemma~\ref{TRBS}),
that $F(T_\eps)\to F(T)$ in the operator norm, provided that $T_\eps \to T$ as $\varepsilon\to0$.
Therefore $\sup_{S\in\cT^\circ_\Omega}\norm{F(S)} > \norm{F(T)}-\nu$ for arbitrarily small $\nu>0$ and $T\in\cT_\Omega$.
Passing to the supremum we obtain $\norm{F}_{\cT^\circ_\Omega}\geq \norm{F}_{\cT_\Omega}$. The reverse inequality
follows from the inclusion $\cT^\circ_\Omega\subseteq \cT_\Omega$.

(i)
We claim that for $p\in\operatorname{Int}\Omega$, $T\in \cT_\Omega$
and sufficiently small $\eps>0$,
we have $T_\varepsilon:=(1-\eps)T+\eps pI\in\cT_\Omega^\circ$.
It is enough to show this property for a single $\Omega_j$,
since $\cT_\Omega=\bigcap_{j=1}^k\cT_{\Omega_j}$ and 
$\cT_\Omega^\circ=\bigcap_{j=1}^k\cT_{\Omega_j}^\circ$,
where $\cT_{\Omega_j}$ ($\cT_{\Omega_j}^\circ$ resp.) is the class of operators
satisfying \eqref{TAr1b} and \eqref{TAr2} (strong inequality in \eqref{TAr2})
for the particular index $1\leq j\leq k$.

In the case $j\leq k_1$ the set $\Omega_j$ is a disc and we have $\norm{T-\alpha_jI}\leq r_j$ and
$|p-\alpha_j|<r_j$. Then
\begin{align*}
\norm{T_\varepsilon-\alpha_j I}=&\norm{T+\varepsilon(pI-T)-\alpha_j I}\\
=&\norm{(1-\varepsilon)(T-\alpha_j I) + \varepsilon(p-\alpha_j)I}\\
\leq&\, (1-\varepsilon) r_j + \varepsilon|p-\alpha_j| < r_j.
\end{align*}

Now consider the case $j\geq k_2+1$ when the set  $\Omega_j$ is a halfplane. 
Take $T\in {\cT}_{\Omega_j}$, i.e.\ $\e^{\ii\theta_j}T-(r_j-1)I$ is invertible
and 
$ \mathbf{P}^{(j)}_\Delta(T):= \pi_j(\Pp(T)^*\Pp(T)-\Pm(T)^*\Pm(T))\pi_j\geq 0$,
where $\pi_j$ denotes the orthogonal projection of $\Comp^k$ onto the $j$-th coordinate.
Fix $p\in \operatorname{Int}\Omega_j$. Then
\begin{align*}
4\real(\e^{\ii\theta_j}T_\varepsilon) - 2r_j I
&=4\real\big(\e^{\ii\theta_j}(\varepsilon pI+ (1-\varepsilon)T)\big) - 2r_j(\varepsilon+(1-\varepsilon))I
\\&=(1-\varepsilon)\big(4\real( \e^{\ii\theta_j}T) - 2r_j I\big)+\varepsilon\big(4\real( \e^{\ii\theta_j}p) - 2r_j\big)I
\\&\geq 0 + \varepsilon\big(4\real(\e^{\ii\theta_j}p) - 2r_j\big)I > 0.
\end{align*}
Note that $|\gamma_j(p)|<1$ if and only if $4\real(\e^{\ii\theta_j}p) -2r_j >0$.
For sufficiently small $\eps$ the operator $\e^{\ii\theta_j}T_\eps-(r_j-1)I$ is invertible. Employing  $\mathbf{P}^{(j)}_\Delta(T_\eps)=4\real( \e^{\ii\theta_j}T_\varepsilon) - 2r_j I >0$
we deduce that $\norm{\gamma_j(T_\eps)}<1$.

(ii)
Let $R=r_1$, in view of \eqref{holes-ring}  we define
$$
d_0:=\sqrt{|\alpha_2|^2-r_2^2}=\sqrt{|\alpha_j|^2-r_j^2},\quad j=2,\ldots,k.
$$ 
and $d_1:=\frac12(R+\max\{r_j+|\alpha_j|:j=2,\ldots,k\})$.
Then
\[ 0<d_0 < |\alpha_j| < |\alpha_j|+r_j< d_1 < R\quad (j=2,\ldots,k), \]
and for sufficiently small $\eps>0$ we have
\[
(1+\eps)d_1 < R,\quad (1-\eps)d_1 > |\alpha_j|+r_j.
\]

Consider $T\in\cB(\cH)\cap \cT_\Omega$ and let $T=U|T|$ be its polar decomposition ($|T|=(T^*T)^{1/2}$
and $U$ is a partial isometry with $\ker U=\ker T$). Using spectral theorem
for $|T|$ we can find $M_l'\in\cB(\cH_l,\cH_l)$ for $l=0,1,2$ such that
$|T|=M_0'\oplus M_1'\oplus M_2'$, $\sigma(M_0)\subseteq [0,d_0]$,
$\sigma(M_1)\subseteq (d_0,d_1]$ and $\sigma(M_2)\subseteq (d_1,R]$.
Let $M_l:=UM_l'\in \cB(\cH_l, U\cH_l)$ ($l=0,1,2$).
We have $T=M_0\oplus M_1\oplus M_2$. In particular, $\norm{Tx}=\norm{M_l'x}$,
provided that $x\in \cH_l$.

Let $S_{0,\eps}:=(1-\eps)M_0$, $S_{1,\eps}:=(1+\eps)T_1$, $S_{2,\eps}:=(1-\eps)T_2$
and $T_\eps:=S_{0,\eps}\oplus S_{1,\eps}\oplus S_{2,\eps}$. 
Clearly $T_\eps\stackrel{\eps\to 0}{\longrightarrow} T$.
What is left to show is that $T_\eps\in \cT_\Omega^\circ$. Observe that it is enough to show that for $\varepsilon>0$
there exists $\nu>0$ such that 
\begin{equation}\label{frombelow}
\|T_\eps x\|\leq R-\nu,\quad \|T_\eps x-\alpha_j x\|\geq r_j+\nu,\quad \|x\|=1.
\end{equation}
Indeed,  for $\varepsilon>0$ sufficiently small, the point $\alpha_j$ will lie in the resolvent set of $T_\eps$
as $T-\alpha_jI $ is invertible and the set of invertible operators is open. In consequence, for such small $\varepsilon>0$,
condition \eqref{frombelow} implies that  $T_\eps\in \mathcal T^\circ_\Omega$.

Now we prove \eqref{frombelow}. Directly from the definition we have $\|S_{l,\eps}x\|\leq \max\{ (1-\eps)R, (1+\eps)d_1\} < R$.
It remains to show that $\|S_{l,\eps}x-\alpha_j x\|\geq C_{l,j} > r_j$
for $x\in\cH_l$, $\|x\|=1$, $j=2,\ldots,k$, and $l=0,1,2$.
Note that since $\|Tx-\alpha_j x\|\geq r_j$, we have
\begin{equation}\label{eq:reTl}
2\real\scp{M_l x,\alpha_jx}-\|M_l x\|^2 \leq |a_j|^2-r_j^2 = d_0^2
\end{equation}
for $x\in\cH_l$, $\|x\|=1$, $j=2,\ldots,k$ and $l=0,1,2$.

Take $l=0$. Note that for $\|x\|=1$ we have $\|M_0 x\|^2 \leq d_0^2=|\alpha_j|^2-r_j^2$,
which combined with \eqref{eq:reTl} produces 
$\real\scp{M_0x,\alpha_j x}\leq d_0^2$ and
\begin{align*}
\|S_{0,\eps} x - \alpha_j x\|^2 =\,& \|(1-\eps)(M_0 x-\alpha_j x) +\eps\alpha_j x\|^2\\
=\,& (1-\eps)^2\|M_0 x-\alpha_j x\|^2 +\eps^2|\alpha_j|^2
    -2\eps(1-\eps)\real\scp{M_0 x-\alpha_j x,\alpha_j x}\\
\geq\,& (1-\eps)^2r_j^2 +\eps^2|a_j|^2 -2\eps(1-\eps)\big(d_0^2 -|\alpha_j|^2\big)\\
>\,&\big((1-\eps)^2+\eps^2+2\eps(1-\eps)\big)r_j^2=r_j^2,
\end{align*}
where the last inequality follows from \eqref{holes-ring}.

Take $l=1$. Knowing that $\|M_1 x\|\geq d_0$ ($x\in\cH_1$) one can obtain
\begin{align*}
\|S_{1,\eps}x-\alpha_jx\|^2 =\,&\|M_1 x-\alpha_j x+\eps M_1 x\|^2 \\
=\, & \|M_1 x-\alpha_j x\|^2 +\eps^2 \|M_1 x\|^2
-2\eps\real\scp{M_1 x,\alpha_j x} +2\eps\|M_1 x\|^2\\
\geq\, & r_j^2+\eps^2\|M_1 x\|^2 +\eps(\|M_1 x\|^2-d_0^2)\\
\geq\, & r_j^2+\eps^2d_0^2 > r_j^2.
\end{align*}

Finally, take $l=2$. We know that
$r_j+|\alpha_j|<(1-\eps)d_1 \leq (1-\eps)\|M_2 x\|$.
Hence
\begin{align*}
\|S_{2,\eps} x-\alpha_j x\| &\geq \|(1-\eps)M_2 x\|-\|\alpha_jx\|\\
&    \geq (1-\eps)d_1-|\alpha_j| > r_j.
\end{align*}
This ends the  proof of \eqref{frombelow} and the theorem.
\end{proof}

\begin{corollary}\label{cor:decentered}
For an  annulus $\Omega$ which is possibly decentered (arbitrary inner and outer radii and centers of the circles,
as long as the hole is contained in the outer disc), i.e. $k_1=1$, $k_2=k=2$ and $|\alpha_1-\alpha_2|+r_2<r_1$,
the Agler norms $\norm{\cdot}_{\cT_\Omega}$ and $\norm{\cdot}_{\cT_\Omega^\circ}$ coincide.
\end{corollary}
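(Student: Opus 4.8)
The plan is to deduce Corollary~\ref{cor:decentered} directly from case~(ii) of Proposition~\ref{Agler<}; the whole point is to verify that a decentered annulus with a single hole always satisfies hypotheses \eqref{holes-inside} and \eqref{holes-ring}, after a harmless normalization. First I would observe that condition~\eqref{holes-ring} is vacuous when $k=2$: it imposes a constraint only on indices $j=3,\ldots,k$, so for $k=2$ there is nothing to check there except the strict positivity $|\alpha_2|^2-r_2^2>0$. Thus the only genuine requirements coming from~(ii) are $k_1=1$, $k_2=k=2$, the inclusion inequality $|\alpha_2|+r_2<r_1$, and the positivity $|\alpha_2|^2>r_2^2$.

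The subtlety is that Proposition~\ref{Agler<}(ii) is stated in the normalization where the \emph{outer} disc is centered at the origin (so $\alpha_1=0$ and $r_1=R$), whereas the Corollary allows an arbitrary center $\alpha_1$ for the outer circle and phrases the inclusion as $|\alpha_1-\alpha_2|+r_2<r_1$. So the first real step is a translation argument: replacing $z$ by $z+\alpha_1$ (equivalently, conjugating every $T\in\cT_\Omega$ by the shift $T\mapsto T-\alpha_1 I$, which is a bijection between $\cT_\Omega$ and $\cT_{\Omega-\alpha_1}$ and between the open classes, and which preserves Agler norms since $F(z)\mapsto F(z+\alpha_1)$ is an isometric algebra isomorphism of the relevant rational function algebras). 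This reduces the decentered annulus to one whose outer circle is centered at the origin, with new hole center $\alpha_2':=\alpha_2-\alpha_1$, so $|\alpha_2'|=|\alpha_1-\alpha_2|$.

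After the translation, \eqref{holes-inside} reads $|\alpha_2'|+r_2<r_1$, which is exactly the hypothesis $|\alpha_1-\alpha_2|+r_2<r_1$ of the Corollary. It remains to check the positivity $|\alpha_2'|^2-r_2^2>0$, i.e.\ $|\alpha_1-\alpha_2|>r_2$; but this is immediate from the inclusion inequality, since $|\alpha_1-\alpha_2| < r_1-r_2$ — wait, that is not quite what we need. The point is rather that the \emph{hole} $\Omega_2=\{|z-\alpha_2|\ge r_2\}$ must actually be a hole, i.e.\ the origin (the center of the outer disc in the normalized picture) lies outside the hole; geometrically, for $\Omega$ to be a genuine annulus (rather than a convex set, which is case~(i)) one needs $0\notin\operatorname{Int}\Omega_2$, equivalently $|\alpha_2'|\ge r_2$, and for it to be nondegenerate one needs strict inequality. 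I would make this explicit: if $|\alpha_1-\alpha_2|\le r_2$ then $\alpha_1\in\Omega_2$, so the inner circle encircles or touches $\alpha_1$ and $\Omega$ is in fact convex (a disc minus nothing in its interior near $\alpha_1$), a case already handled by Proposition~\ref{Agler<}(i); so without loss of generality $|\alpha_1-\alpha_2|>r_2$, giving \eqref{holes-ring}. With all hypotheses of Proposition~\ref{Agler<}(ii) verified, the conclusion $\norm{\cdot}_{\cT_\Omega}=\norm{\cdot}_{\cT_\Omega^\circ}$ follows.

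The main obstacle is not any hard estimate — Proposition~\ref{Agler<} does all the analytic work — but rather the bookkeeping around normalization: making the translation argument airtight (checking it really is an isometry for both the closed and open Agler norms and maps the two operator classes correctly), and correctly disposing of the degenerate sub-case where the putative hole contains the outer center, which must be routed through case~(i) instead of case~(ii). Once those are handled, the proof is a one-line appeal to Proposition~\ref{Agler<}.
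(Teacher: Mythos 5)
Your reduction of the case $|\alpha_1-\alpha_2|>r_2$ to Proposition~\ref{Agler<}(ii) is fine: the translation $T\mapsto T-\alpha_1 I$ is an isometric bijection between the relevant operator classes and function algebras, condition \eqref{holes-ring} is vacuous for $k=2$ apart from the positivity $|\alpha_2-\alpha_1|^2-r_2^2>0$, and \eqref{holes-inside} becomes exactly the hypothesis of the corollary. This part matches what the paper does implicitly.

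The gap is in how you dispose of the remaining case $|\alpha_1-\alpha_2|\le r_2$. You assert that $\Omega$ is then convex and hence covered by case (i). That is false: $\Omega=\{|z-\alpha_1|\le r_1\}\cap\{|z-\alpha_2|\ge r_2\}$ is a closed disc with the nonempty open disc $\{|z-\alpha_2|<r_2\}$ removed from its interior (the hypothesis $|\alpha_1-\alpha_2|+r_2<r_1$ forces the hole to lie strictly inside the outer disc), so $\Omega$ is doubly connected no matter where $\alpha_1$ sits relative to the hole; here $k_1\ne k_2$ and case (i) simply does not apply. (Note also that $\alpha_1\in\Omega_2$ iff $|\alpha_1-\alpha_2|\ge r_2$, the opposite of what you wrote.) The paper handles this sub-case by rerunning the argument of (ii) with a modified perturbation: since $d_0=\sqrt{|\alpha_2|^2-r_2^2}$ is no longer real, the polar decomposition is split into only two blocks, $|T|=M_1'\oplus M_2'$ with $\sigma(M_1')\subseteq[0,d_1]$, and one sets $T_\eps=(1+\eps)UM_1'\oplus(1-\eps)UM_2'$, pushing the part of $T$ near the (center-containing) hole outward instead of shrinking it. Without some argument of this kind your proof does not cover all the annuli allowed by the statement.
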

\begin{proof}
The case not covered by (ii) is when $|\alpha_2|<r_2<r_1-|\alpha_2|$.
The proof is  analogous, however we need to consider
$|T|=M_1'\oplus M_2'$ with $\sigma(M_1')\subset[0,d_1]$ and
define $T_\eps = (1-\eps)UM_2'\oplus (1+\eps)UM_1'$. 
\end{proof}

\begin{remark}
A similar construction can be applied to other domains. Namely, it is possible to add another `rings of holes' as long as they are 
separated from the existing one by an annulus of nonzero width.
\end{remark}

\section{Special cases}\label{sec:annulus}

\subsection{The convex case}
   Observe that if  $\Omega$ is convex, it contains the numerical range.
Recall the seminal result  \cite{CP}, that the complete spectral constant is bounded by $1+\sqrt 2$.
Therefore, we have the following corollary.
  
\begin{corollary}
Let $\Omega$ given by \eqref{Omegadef} be convex. For any $F\in \cM_n( \mathcal{R}(\Omega) )$ with
$$\sup_{z\in \Omega}\|F(z)\| \leq(1+\sqrt 2)^{-1},$$
there exists a positive integer $m$ and a contractive matrix
{\tiny $\begin{bmatrix} A & B \\ C & D \end{bmatrix}$} of size $(km+n) \times (km+n)$ such that
\begin{equation}
F(z) = D +C\mathbf{P}_-(z)_{m}(\mathbf{P}_+(z)_{m}-A\mathbf{P}_-(z)_{m})^{-1}B, 
\end{equation}
where $\mathbf P_\pm(z)$ are defined by \eqref{PpD} and \eqref{PmD}.
\end{corollary}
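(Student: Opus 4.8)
The plan is to deduce the corollary directly from Theorem~\ref{mainT} by showing that the hypothesis $\sup_{z\in\Omega}\|F(z)\|\le (1+\sqrt2)^{-1}$ forces $\|F\|_{\mathcal T_\Omega}<1$. The bridge between the two norms is the classical Crouzeix--Palencia result \cite{CP}: if $\Omega$ contains the numerical range of an operator $T$, then $\Omega$ is a $(1+\sqrt2)$-complete spectral set for $T$.

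First I would observe that convexity of $\Omega$ (which, in the notation of \eqref{Omegadef}, is exactly the case $k_1=k_2$) means $\Omega$ is a bounded intersection of closed discs and closed half-planes, hence a closed convex set. For any $T\in\mathcal T_\Omega$, Proposition~\ref{basicAglerD} already records that $\sigma(T)\subseteq\Omega$; what I additionally need is that the numerical range $W(T)$ lies in $\Omega$. Since $\Omega$ is an intersection of the $\Omega_j$, it suffices to check $W(T)\subseteq\Omega_j$ for each $j$. For a disc $\Omega_j=\{|z-\alpha_j|\le r_j\}$ this follows from $\|\gamma_j(T)\|\le 1$, i.e.\ $\|T-\alpha_j I\|\le r_j$, together with the standard fact $W(S)\subseteq\{|w|\le\|S\|\}$ applied to $S=T-\alpha_j I$ (and $W(T-\alpha_j I)=W(T)-\alpha_j$). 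For a half-plane $\Omega_j=\{\operatorname{Re}(e^{\ii\theta_j}z)\le r_j\}$ the condition $\|\gamma_j(T)\|\le1$ is equivalent (cf.\ the computation in the proof of Proposition~\ref{Agler<}) to $4\operatorname{Re}(e^{\ii\theta_j}T)-2r_jI\le 0$, which says precisely that $W(T)\subseteq\Omega_j$. Hence $W(T)\subseteq\Omega$.

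Next I would apply \cite{CP}: for every $T\in\mathcal T_\Omega$ and every $F\in\mathcal M_n(\mathcal R(\Omega))$ one has $\|F(T)\|\le (1+\sqrt2)\sup_{z\in\Omega}\|F(z)\|$. Taking the supremum over $T\in\mathcal T_\Omega$ gives
$$
\|F\|_{\mathcal T_\Omega}\le (1+\sqrt2)\sup_{z\in\Omega}\|F(z)\|.
$$
Therefore the assumed bound $\sup_{z\in\Omega}\|F(z)\|\le(1+\sqrt2)^{-1}$ yields $\|F\|_{\mathcal T_\Omega}\le 1$; to get the strict inequality needed in Theorem~\ref{mainT} I would either note that one may replace $F$ by $(1-\delta)F$ for small $\delta>0$, apply the theorem, and then absorb the scalar $(1-\delta)^{-1}$ back into the colligation while keeping it contractive, or — more cleanly — simply state the corollary under the hypothesis $\sup_{z\in\Omega}\|F(z)\|<(1+\sqrt2)^{-1}$ (the displayed inequality in the statement is $\le$, so a one-line scaling remark settles the boundary case). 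With $\|F\|_{\mathcal T_\Omega}<1$ in hand, Theorem~\ref{mainT} produces the integer $m$ and the contractive colligation matrix giving the realization \eqref{F-ABCD}, which is exactly the asserted formula since $\mathbf P_\pm$ here are the matrix polynomials \eqref{PpD}--\eqref{PmD}.

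The only genuinely non-routine point is the inclusion $W(T)\subseteq\Omega$ for $T\in\mathcal T_\Omega$; everything else is either quoting \cite{CP}, quoting Theorem~\ref{mainT}, or the trivial scaling remark. That inclusion is itself elementary once one unwinds the definition of $\mathcal T_\Omega$ in the convex case: each defining condition $\|\gamma_j(T)\|\le 1$ translates into a containment of $W(T)$ in the corresponding disc or half-plane $\Omega_j$, and the numerical range of an intersection is the intersection of the numerical-range constraints. So I expect the proof to be short, with the ``obstacle'' being merely the careful half-plane case, where one must pass between the Möbius-transform formulation $\|\gamma_j(T)\|\le1$ and the real-part inequality $\operatorname{Re}(e^{\ii\theta_j}T)\le r_jI$.
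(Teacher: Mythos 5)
Your proof follows exactly the paper's (very terse) argument for this corollary: convexity of $\Omega$ forces the numerical range $W(T)$ to lie in $\Omega$ for every $T\in\mathcal T_\Omega$, the Crouzeix--Palencia theorem \cite{CP} then gives $\|F\|_{\mathcal T_\Omega}\le(1+\sqrt2)\sup_{z\in\Omega}\|F(z)\|\le 1$, and Theorem~\ref{mainT} produces the realization. You are in fact more careful than the paper on two points it silently elides --- the explicit verification of $W(T)\subseteq\Omega_j$ for each disc and half-plane (where, incidentally, the correct real-part inequality is $\operatorname{Re}(\e^{\ii\theta_j}T)\le r_jI$, not the factor-of-two variant copied from the proof of Proposition~\ref{Agler<}), and the mismatch between the non-strict hypothesis and the strict bound $\|F\|_{\mathcal T_\Omega}<1$ required by Theorem~\ref{mainT} --- though note that your first proposed fix for the latter (absorbing $(1-\delta)^{-1}$ back into the colligation) does not obviously preserve contractivity of the colligation, so restating the hypothesis with strict inequality is the safe resolution.
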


\subsection{The case of the annulus}

Let 
$$
A_{R,r} = \{z \in \mathbb{C}: r \leq |z| \leq R\}, \quad 0 < r < R
$$
be the annulus. Recall the formulas
\[
\Pp(z)=\begin{bmatrix}
R & 0\\ 0 & z\end{bmatrix},\quad \Pm(z)=\begin{bmatrix}
z & 0\\ 0 & r\end{bmatrix}
\]
and for a positive integer $k$ we set ${\bf P}_\pm(z)_{k}:={\bf P}_\pm(z)\otimes I_{k}$.
In this setting the map $\gamma: A_{R,r} \to \overline{\mathbb D^2}$ takes the form
$$\gamma(z) = \left(\frac{z}{R}, \frac{r}{z}\right).$$
Theorem \ref{preptheoremD} gives us the following corollary, cf.\ Mittal \cite{Mittal} for an analogous result
for $H^{\infty}$ functions supported by an extensive theory.
\begin{corollary} \label{cor_annulus}
For all $ F \in \cM_n(\mathcal{R}(A_{R,r})) $ we have the following identity for the Agler norm on the annulus:
\begin{equation}\label{Mittalrat}
\|F\|_{\mathcal T_{A_{R,r}}} = \inf \left \{\sup_{z\in{A_{R,r}}} \|G(z)\|:
	G \in \cM_n(\mathcal{R}(\overline{\mathbb{D}^2})), \gamma^*(G) = F\right\}.
\end{equation}
\end{corollary}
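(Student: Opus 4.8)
The plan is to obtain Corollary~\ref{cor_annulus} as a direct specialization of Theorem~\ref{preptheoremD}, combined with the fact that the supremum norm and the Agler norm coincide on the bidisk $\overline{\mathbb D^2}$ (the two-variable case of Agler's theorem via And\^o's inequality, which was recalled in the introduction). First I would record that the annulus $A_{R,r}$ is exactly the set $\Omega$ of \eqref{Omegadef} in the case $k=k_1=k_2=2$ with $\alpha_1=\alpha_2=0$, $r_1=R$ (so $\Omega_1=\{|z|\le R\}$) and $r_2=r$ (so $\Omega_2=\{|z|\ge r\}$). With these choices the polynomials $\mathbf P_{\pm}$ of \eqref{PpD}--\eqref{PmD} become precisely the $2\times2$ diagonal matrices $\mathbf P_+(z)=\operatorname{diag}(R,z)$ and $\mathbf P_-(z)=\operatorname{diag}(z,r)$, and the identity \eqref{PgammaOmega} gives $\mathbf P_-(z)\mathbf P_+(z)^{-1}=\operatorname{diag}(z/R,\,r/z)$, so the embedding is $\gamma(z)=(z/R,\,r/z)$ as claimed. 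In particular $\mathcal T_\Omega=\mathcal T_{A_{R,r}}$ and all the hypotheses of Theorem~\ref{preptheoremD} hold.

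Next I would invoke Theorem~\ref{preptheoremD} verbatim to get, for every $F\in\cM_n(\mathcal R(A_{R,r}))$,
$$
\|F\|_{\mathcal T_{A_{R,r}}}=\|F\|_{\mathcal T\ker\gamma^*}=\inf\bigl\{\|G\|_{\mathcal T_{\mathbb D^2}}:G\in\cM_n(\mathcal R(\overline{\mathbb D^2})),\ \gamma^*(G)=F\bigr\}.
$$
The only remaining point is to replace the Agler norm $\|G\|_{\mathcal T_{\mathbb D^2}}$ appearing inside the infimum by the supremum norm $\sup_{z\in\overline{\mathbb D^2}}\|G(z)\|$. This is where I would use that we are in exactly \emph{two} variables: by And\^o's inequality \cite{Ando}, for any $k\times l$ rational matrix function $G$ analytic on a neighborhood of $\overline{\mathbb D^2}$ and any pair of commuting Hilbert space contractions $(T_1,T_2)$ one has $\|G(T_1,T_2)\|\le\sup_{z\in\overline{\mathbb D^2}}\|G(z)\|$; taking the supremum over such pairs gives $\|G\|_{\mathcal T_{\mathbb D^2}}\le\sup_{z\in\overline{\mathbb D^2}}\|G(z)\|$, and the reverse inequality is trivial since point evaluations $z\in\overline{\mathbb D^2}$ are themselves tuples of commuting contractions. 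Hence $\|G\|_{\mathcal T_{\mathbb D^2}}=\sup_{z\in\overline{\mathbb D^2}}\|G(z)\|$ for every such $G$, and substituting this into the infimum yields exactly \eqref{Mittalrat}.

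There is essentially no obstacle here: the content is entirely in Theorem~\ref{preptheoremD}, and the corollary is a bookkeeping step plus the classical two-variable coincidence of norms. The one small care needed is that the And\^o-type inequality is applied to $G$ analytic on a \emph{neighborhood} of $\overline{\mathbb D^2}$ (which is automatic for $G\in\cM_n(\mathcal R(\overline{\mathbb D^2}))$ by the definition of $\mathcal R(\Xi)$ given in the notation section), so that the functional calculus $G(T_1,T_2)$ is well defined for commuting contractions and the estimate is valid up to and including the boundary. I would also remark, as the statement already does, that this recovers in the rational setting the identity obtained for $H^\infty(A_{R,r})$ by Mittal \cite{Mittal}.
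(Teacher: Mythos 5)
Your proposal is correct and follows essentially the same route as the paper: specialize Theorem~\ref{preptheoremD} to the annulus and then use And\^o's theorem to replace the bidisc Agler norm inside the infimum by the supremum norm. One cosmetic slip: to realize $A_{R,r}$ as in \eqref{Omegadef} you need $k_1=1$ and $k_2=k=2$ (not $k=k_1=k_2=2$, which would make $\Omega_2$ a disc rather than the complement $\{|z|\ge r\}$); your formulas for $\mathbf P_{\pm}$ and $\gamma$ are nonetheless the correct ones.
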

\begin{proof}
For $F \in \cM_n(\mathcal{R}(A_{R,r}))$, due to Theorem \ref{preptheoremD}, we have
$$
\|F\|_{\mathcal T\ker\gamma^*} =  \|F\|_{\mathcal T_{A_{R,r}}}
$$
According to Ando's theorem, this implies \eqref{Mittalrat}. 
\end{proof}

It is natural to ask for which functions the  infimum in \eqref{Mittalrat} is attained.
While in general it is not yet answered, we discuss some special cases. For clarity, in the following proposition we restrict
our attention to $A_{1, r}$. It is worth noting that using the well known fact, that two annuli are conformally equivalent
if and only if their radii have the same ratio. Hence, considerations for $A_{R, r}$ are equivalent
to those for $A_{1,\frac{r}{R}}$. 
\begin{proposition}\label{k,l}
Let $F(z) = z^k + \frac{1}{z^l}$ where $k$ and $l$ are positive integers and let
$G(z_1, z_2) = z_1^k + \frac{z_2^l}{r^l}$. Then 
$$
\|F\|_{\mathcal T_{A_{1,r}}}=\norm G_\infty: =\sup_{z\in\overline{\mathbb D^2}} \|G(z)\|
$$
\end{proposition}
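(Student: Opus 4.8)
The plan is to use Corollary~\ref{cor_annulus}, which gives $\|F\|_{\mathcal T_{A_{1,r}}}=\inf\{\|G'\|_\infty : \gamma^*(G')=F\}$, so the inequality $\|F\|_{\mathcal T_{A_{1,r}}}\le\|G\|_\infty$ is immediate once we check that this particular $G$ satisfies $\gamma^*(G)=F$: indeed $G(\gamma(z))=G(z/1,\,r/z)=(z/1)^k+(r/z)^l/r^l=z^k+z^{-l}$. So the whole content is the reverse inequality $\|F\|_{\mathcal T_{A_{1,r}}}\ge\|G\|_\infty$, i.e.\ we must exhibit operators witnessing that the Agler norm over $\mathcal T_{A_{1,r}}$ is at least $\sup_{|z_1|,|z_2|\le 1}|z_1^k+r^{-l}z_2^l|$.

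The key idea is that $\|G\|_\infty = \sup_{|z_1|=|z_2|=1}|z_1^k+r^{-l}z_2^l| = 1+r^{-l}$, attained by aligning the two unimodular terms. First I would compute this explicitly: writing $z_1=e^{i\phi}$, $z_2=e^{i\psi}$ the quantity $|e^{ik\phi}+r^{-l}e^{il\psi}|$ is maximized when $k\phi\equiv l\psi \pmod{2\pi}$, giving value $1+r^{-l}$; and $\|G\|_\infty\le 1+r^{-l}$ trivially by the triangle inequality, so $\|G\|_\infty=1+r^{-l}$. Then I would produce a single operator $T\in\mathcal T_{A_{1,r}}$ (acting on a finite-dimensional Hilbert space) with $\|F(T)\|=1+r^{-l}$. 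The natural candidate is a weighted shift, or better a normal operator: if $T$ is \emph{normal} with $\sigma(T)\subseteq A_{1,r}$ then automatically $\|T\|\le 1$ and $\|T^{-1}\|\le r^{-1}$, hence $\gamma_1(T)=T$, $\gamma_2(T)=rT^{-1}$ are contractions and $T\in\mathcal T_{A_{1,r}}$; and $\|F(T)\|=\max_{\lambda\in\sigma(T)}|\lambda^k+\lambda^{-l}|$. Choosing a single point $\lambda=re^{i\theta}$ on the inner circle $|\lambda|=r$, we get $|\lambda^k+\lambda^{-l}| = |r^ke^{ik\theta}+r^{-l}e^{-il\theta}|$, which is maximized over $\theta$ at value $r^k+r^{-l}$ — but this is strictly less than $1+r^{-l}$. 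So the inner circle alone does \emph{not} suffice.

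The fix — and I expect this to be the main obstacle — is that we cannot use a single normal point; we genuinely need the non-commutative/off-diagonal freedom, i.e.\ a non-normal $T$. The right construction is a $2\times2$ (or $n\times n$) operator matrix combining a point on $|z|=1$ with a point on $|z|=r$: take $T=\begin{bmatrix} e^{i\alpha} & t\\ 0 & re^{i\beta}\end{bmatrix}$ on $\mathbb C^2$, pick the phases $\alpha,\beta$ so that $e^{ik\alpha}$ and $r^{-l}e^{-il\beta}$ (the $r^{-l}$ coming from the $(2,2)$ entry of $F(T)$, since the bottom-right of $T^{-l}$ is $(re^{i\beta})^{-l}$) are aligned in argument, and then the entries $e^{i\alpha}$, $re^{i\beta}$ of $F(T)$ sit on the diagonal with a suitable off-diagonal term; as $t$ ranges over admissible values (keeping $\|T\|\le 1$ and $\|T^{-1}\|\le r^{-1}$, equivalently keeping $T\in\mathcal T_{A_{1,r}}$, which for a strictly upper-triangular perturbation holds for small $t>0$ and can be pushed toward $t\to\text{its extreme}$) the norm $\|F(T)\|$ increases past any value below $1+r^{-l}$. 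Concretely I would show $\sup_{t}\|F(T)\| = 1+r^{-l}$, e.g.\ by testing $F(T)$ against the vector that puts the aligned diagonal entries in phase, or by a direct $2\times2$ norm computation. Letting $t$ tend to the boundary of the admissible region (or taking larger matrices to approach the joint boundary behaviour of $\overline{\mathbb D^2}$) yields $\|F\|_{\mathcal T_{A_{1,r}}}\ge 1+r^{-l}=\|G\|_\infty$, completing the proof. An alternative, cleaner route for the lower bound is to invoke that $\mathcal T_{A_{1,r}}$ contains, via $\gamma$, enough operators to recover the bidisk Agler (= sup, by Andô) norm of $G$ restricted to the image $\gamma(A_{1,r})$; since the image torus $\{(z, r/z): |z|=r\}\cup\cdots$ together with scalings exhausts the point where $z_1^k+r^{-l}z_2^l$ is maximal on $\overline{\mathbb D^2}$ only in the limit, some such limiting/boundary argument is unavoidable, and that is exactly the technical heart of the proposition.
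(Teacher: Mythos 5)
Your reduction is correct: since $\gamma^*(G)=F$, Corollary~\ref{cor_annulus} gives $\|F\|_{\mathcal T_{A_{1,r}}}\le\|G\|_\infty=1+r^{-l}$, so everything hinges on exhibiting $T\in\mathcal T_{A_{1,r}}$ with $\|F(T)\|=1+r^{-l}$, and you rightly observe that normal operators cannot achieve this. But your concrete witness does not exist. For $T=\left(\begin{smallmatrix} e^{\ii\alpha} & t\\ 0 & re^{\ii\beta}\end{smallmatrix}\right)$, membership in $\mathcal T_{A_{1,r}}$ requires $\|\gamma_1(T)\|=\|T\|\le 1$ (here $R=1$); since $\|Te_1\|=1$, contractivity forces $T^*Te_1=e_1$, and the off-diagonal entry of $T^*Te_1$ is $\bar t e^{\ii\alpha}$, so $t=0$. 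The ``admissible region'' for $t$ is the single point $0$: there is no boundary to let $t$ tend to, and the supremum of $\|F(T)\|$ over your family collapses to the normal-operator value you already showed is too small. Your alternative route --- that the pairs $(T,rT^{-1})$ are rich enough to recover the bidisk norm of $G$ --- is also not available as stated: these pairs satisfy the algebraic constraint $T_1T_2=rI$, and the example immediately following this proposition in the paper shows that for other lifts the infimum in \eqref{Mittalrat} is \emph{not} attained at the naive lift, so some special feature of $z_1^k+r^{-l}z_2^l$ must be exploited. You correctly located the technical heart of the proof but did not supply it.

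The paper's witness is a single $(k+l)\times(k+l)$ weighted \emph{cyclic} shift $M$: $k$ superdiagonal weights equal to $1$, followed by $l-1$ superdiagonal weights equal to $r$, and corner entry $(k+l,1)$ equal to $r$. Then $\|M\|=1$ and $\|M^{-1}\|=r^{-1}$ (the inverse is the reverse cyclic shift with weights $1$ and $r^{-1}$), so $M\in\mathcal T_{A_{1,r}}$. Since $k\equiv -l\pmod{k+l}$, the matrices $M^{k}$ and $M^{-l}$ are generalized permutation matrices implementing the \emph{same} cyclic shift, hence $F(M)=M^k+M^{-l}$ is again a generalized permutation matrix, whose norm is the largest modulus of its entries, namely $a_1=1+r^{-l}$. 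This explicit construction (or an equivalent one) is the missing idea; without it the lower bound $\|F\|_{\mathcal T_{A_{1,r}}}\ge\|G\|_\infty$ is not established.
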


\begin{proof}
Since $\gamma^*(G)=F$ we clearly have $\|F\|_{\mathcal T_{A_{1,r}}} \leq \|G\|_{\infty}$. To prove the equality we find a matrix $M$ such that $M \in \mathcal T_{A_{1,r}}$ and $\|F(M)\| = \|G\|_{\infty}$. Note that
$$|G(z_1, z_2)| = \left|z_1^k + \frac{z_2^l}{r^l}\right| \leq 1 + \frac{1}{r^l}$$
and for $z_1 = z_2 = 1$ we have the equality so $\|G(z_1, z_2)\|_{\infty} = 1 + \frac{1}{r^l}$.
Now let
$$
M = \begin{bmatrix}
0& 1 & \\
& \ddots& \ddots \\
& & 0& 1 \\
& & &0 & r \\
& & & & \ddots& \ddots & \\
& & & & &0 & r \\ 
r & & & &&& 0
\end{bmatrix},
$$
i.e., it is a $(k+l) \times (k+l)$ matrix with $0$ entries except for $1$ in entries
$(i, i+1)$ for $i = 1, \ldots, k$ and $r$ in entries $(i, i+1)$ for $i = k+1, \ldots, k+l-1$ and $r$ in entry  $(k+l, 1)$.
Since  $r < 1$, it follows that $\|M\| = 1$ and $\|M^{-1}\| = \frac{1}{r}$. One may observe that 
$$
F(M) = M^k + M^{-l} = \begin{bmatrix}
& & & a_1 & \\
& & & & \ddots \\
& & & & & a_l \\
a_{l+1} & & & & & \\ 
& \ddots & & & & \\
& & a_{k+l} 
\end{bmatrix},$$
where $a_i = r^{i-1} + \frac{1}{r^{l+1-i}}$ for $i = 1, \ldots, \min(k,l)$, $a_i = r^{\min(k,l)} + \frac{1}{r^{l-\min(k,l)}}$
for $i = \min(k,l)+1, \ldots, \max(k,l)+1$ and $a_i = r^{k+l+1-i} + \frac{1}{r^{i-k-1}}$ for $i = \max(k,l)+2, \ldots, k+l$.
It follows that $\max(a_1, \ldots, a_{k+l}) = a_1 = 1 + \frac{1}{r^l}$.
Therefore $\|F(M)\| =  1 + \frac{1}{r^l} = \|G\|_{\infty}$ and we obtain the claim.
\end{proof}

The above Proposition shows that for the function $F(z) = z^k + \frac{1}{z^l}$ the infimum in \eqref{Mittalrat}
is attained by a function $G$ satisfying  $\gamma^*(G)=F$, constructed according to formula \eqref{FtoG}.
However, this is not always the case, as the following example shows.

\begin{example}
Let $F(z) = z + \frac{1}{z} + 1$, when $R=1$ and $r=\frac{1}{2}$. Formula \eqref{FtoG} produces a function
$G(z_1, z_2) = z_1+2z_2+1$ that has the supremum norm on the bidisc equal to $4$. However, one may calculate
that the function $G_1(z_1, z_2) = z_1+2z_2+1-\frac{z_1z_2-\frac{1}{2}}{2}$, which satisfies $\gamma^*(G_1)=F$,
has the supremum norm strictly less than $4$, showing that $\|F\|_{\mathcal T_{A_{1,r}}} < \|G\|_{\infty}$.
\end{example}

Theorem \ref{mainT} gives us an immediate corollary, namely the finite dimensional realization theorem for the annulus.
Note that for $\Omega$ being the annulus Propostion~\ref{basicAglerD} says that $A_{R,r}$
is a $\left(2+\frac{2\sqrt 3}3 \right) $-complete spectral set for any $T\in\mathcal T_{A_R,r}$ and any $R>r$.
In the theorem below we use the   inverse of this explicit value $\left( 2+\frac{2\sqrt 3}3\right)^{-1}= \frac{3-\sqrt 3}4$.
Note that that there exists other estimates on the complete spectral constant for the annulus. In particular, 
any bound independent on $T$ has to be greater or equal to 2  \cite{tsikalas2021}. 

\begin{corollary} \label{realizationTannulus}
For any $F\in \cM_n( \mathcal{R}(A_{R,r}) )$ with  $\sup_{r\leq|z|\leq R}\|F(z)\|< \frac{3-\sqrt 3}4$
there exists a positive integer $m$ and a contractive matrix
{\tiny $\begin{bmatrix} A & B \\ C & D \end{bmatrix}$} of size $(2m+n) \times (2m+n)$
such that
\begin{equation*}\label{1}
F(z) = D +C\mathbf{P}_-(z)_{m}(\mathbf{P}_+(z)_{m}-A\mathbf{P}_-(z)_{m})^{-1}B.
\end{equation*}
\end{corollary}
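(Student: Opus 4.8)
The plan is to obtain this as a direct specialization of Theorem~\ref{mainT}, using Proposition~\ref{basicAglerD} to convert the supremum-norm hypothesis into the Agler-norm hypothesis required there. First I would record that the annulus $A_{R,r}$ fits the template \eqref{Omegadef} with $k=2$, $k_1=1$, $k_2=k=2$, $\alpha_1=\alpha_2=0$, $r_1=R$, $r_2=r$; then $\Omega_1=\{z:|z|\le R\}$, $\Omega_2=\{z\in\bar\Comp:|z|\ge r\}$, and the polynomials from \eqref{PpD}--\eqref{PmD} collapse to $\Pp(z)=\diag(R,z)$ and $\Pm(z)=\diag(z,r)$, matching the formulas displayed at the start of this subsection.

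Next I would invoke Proposition~\ref{basicAglerD} with $k=2$: for every $F\in\cM_n(\mathcal R(A_{R,r}))$,
$$
\norm{F}_{\mathcal T_{A_{R,r}}}\le\Bigl(2+\tfrac{2\sqrt3}{3}\Bigr)\sup_{r\le|z|\le R}\norm{F(z)}.
$$
A one-line computation gives $\bigl(2+\tfrac{2\sqrt3}{3}\bigr)^{-1}=\tfrac{3}{6+2\sqrt3}=\tfrac{3-\sqrt3}{4}$, so the hypothesis $\sup_{r\le|z|\le R}\norm{F(z)}<\tfrac{3-\sqrt3}{4}$ forces $\norm{F}_{\mathcal T_{A_{R,r}}}<1$.

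Finally I would apply Theorem~\ref{mainT} to $\Omega=A_{R,r}$. Since $\norm{F}_{\mathcal T_\Omega}<1$, it produces $m\in\IN$ and a contractive block matrix $\begin{bmatrix}A&B\\C&D\end{bmatrix}$ of size $(km+n)\times(km+n)=(2m+n)\times(2m+n)$ satisfying \eqref{F-ABCD} with the above $\Pp,\Pm$, which is exactly the asserted realization. There is no genuine obstacle here: the argument is a packaging of earlier results, and the only points needing attention are the identification of the annulus with \eqref{Omegadef} and the elementary simplification of the constant $\bigl(2+\tfrac{2\sqrt3}{3}\bigr)^{-1}$.
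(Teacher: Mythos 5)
Your proposal is correct and follows exactly the paper's route: the paper states this as an immediate consequence of Theorem~\ref{mainT}, with the preceding remark invoking Proposition~\ref{basicAglerD} for $k=2$ and simplifying $\bigl(2+\tfrac{2\sqrt3}{3}\bigr)^{-1}=\tfrac{3-\sqrt3}{4}$, just as you do. Nothing is missing.
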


We end this section with a connection to systems theory.

\begin{example}
Consider the system of difference equations
$$ (\Sigma ) \ \ \left\{ \begin{matrix} 
\begin{pmatrix} x_{k+1} \cr \tilde{x}_k \end{pmatrix} = A \begin{pmatrix} x_{k} \cr r\tilde{x}_{k+1} \end{pmatrix} +Bu_k & k=\dots , -1,0,1 , \dots, \cr y_k =C \begin{pmatrix} x_{k} \cr r\tilde{x}_{k+1} \end{pmatrix} + Du_k & k=\dots , -1,0,1 , \dots . \end{matrix} \right.$$
Here $(u_k)_{k\in{\mathbb Z}}$ is the input sequence, $(y_k)_{k\in{\mathbb Z}}$ the output sequence, and $\begin{pmatrix} x_{k} \cr \tilde{x}_{k} \end{pmatrix}_{k\in{\mathbb Z}}$ represents the state space variable. Let now 
$$ u(z) = \sum_{k=-\infty}^\infty u_kz^k , \ 
y(z) = \sum_{k=-\infty}^\infty y_kz^k ,\ 
\begin{pmatrix} x(z) \cr \tilde{x}(z) \end{pmatrix} = \sum_{k=-\infty}^\infty \begin{pmatrix} x_{k} \cr \tilde{x}_{k} \end{pmatrix}z^k ,
$$ which we assume to be convergent for $r<|z|<1$.
Multiplying the equations in the system $(\Sigma)$ by $z^k$ and summing them, we obtain the following
$$  \left\{ \begin{matrix} \begin{pmatrix} \frac1z & 0 \cr 0 & 1 \end{pmatrix} \begin{pmatrix} x(z) \cr \tilde{x}(z) \end{pmatrix} = A \begin{pmatrix} 1 & 0 \cr 0 & \frac{r}{z} \end{pmatrix}  \begin{pmatrix} x(z) \cr \tilde{x}(z) \end{pmatrix} +Bu(z) \cr\ \ \ \ \ \ \ \ \ \  y(z) =C \begin{pmatrix} 1 & 0 \cr 0 & \frac{r}{z} \end{pmatrix}  \begin{pmatrix} x(z) \cr \tilde{x}(z) \end{pmatrix} + Du(z)  . \end{matrix} \right.$$
Using the first equation to solve for $\begin{pmatrix} x(z) \cr \tilde{x}(z) \end{pmatrix}$ and plugging it into the second equation, we obtain the following relation between the input $u(z)$ and output $y(z)$:
$$ y(z) = \left[ C \begin{pmatrix} 1 & 0 \cr 0 & \frac{r}{z} \end{pmatrix} \left( \begin{pmatrix} \frac1z & 0 \cr 0 & 1 \end{pmatrix} - A \begin{pmatrix} 1 & 0 \cr 0 & \frac{r}{z} \end{pmatrix} \right)^{-1} B + D\right] u(z) $$
$$ \ \ \ \ \ \ \ \ \ \ \ \ = \left[ C \begin{pmatrix} z & 0 \cr 0 & r \end{pmatrix} \left( \begin{pmatrix} 1 & 0 \cr 0 & z \end{pmatrix} - A \begin{pmatrix} z & 0 \cr 0 & {r} \end{pmatrix} \right)^{-1} B + D\right] u(z)
=: F(z) u(z) . $$
Note that $F(z)$ has exactly the form as in Corollary \ref{realizationTannulus}.
\end{example}

\section{The Bohr inequality for the annulus and the bidisc}
\label{sec:bohr}

In this section we consider scalar-valued functions only.  We will define the Bohr constant for the annulus and show its relation with the known Bohr  constant for the bidisc and improve the upper bound on the latter one. Let us review the known definitions and  facts.        For a function $f(z)=\sum_{\alpha\in\mathbb N^d}c_\alpha z^\alpha$,  holomorphic on the polydisc we define
$$
\norm f_{\hat L^1}:=\sum_\alpha |c_\alpha|.
$$
Further, the constant $K_d$ is defined as 
$$
K_d=\sup\left\{ \rho>0:   \norm{ f_\rho}_{\hat L^1} \leq 1 \text{ for all } f  \in H^\infty(\mathbb D^d)\text{ such that } \sup_{z\in\mathbb D^d}| f(z)| \leq 1 \right\},
$$
where $f_\rho(z)=f(z\rho)$, $\rho>0$. Several results were establishing the estimates of the constants $K_d$; we refer to a recent paper by Knese \cite{knese2024three} for an elegant summary. Furthermore, Knese in  \cite{knese2024three} defines and gives estimates for another sequence of constants
$$
K_d( \mathcal T_{\mathbb D^d} ) =\sup\left\{ \rho>0:   \norm{ f_\rho}_{\hat L^1} \leq 1 \text{ for all } f\in H^\infty(\mathbb D^d)   \text{ such that } \| f\|_{\mathcal T^\circ_{\mathbb D^d}}\leq 1 \right\}.
$$ 
 Note that $\| f\|_{\mathcal T^\circ_{\mathbb D^d}}$ in the above formula denotes the Agler norm with respect to the set of tuples of strict Hilbert space contractions, see \eqref{Aglercirc}, which is well defined  for $f\in H^\infty(\mathbb D^d) $.  
  
Our interest will lie only in   $d=1,2$, for which we have the following:
$$
\frac 13=K_1=K_1(\mathcal T_{\mathbb D} ) \geq K_2=K_2(\mathcal T_{\mathbb D^2} )\geq 0.3006.
$$
The first equality above is due to Bohr \cite{Bohr14}, the second expresses  von Neumann inequality, the  inequality $K_1\geq K_2$ is obvious, while the last equality follows from the Ando's theorem; finally the last inequality is due to Knese  \cite{knese2024three}.

To keep the analogy, for  $f\in H^\infty(A_{R,r})$ ($R>r$) having the expansion  $f= \sum_{k=-\infty}^\infty  a_k z^k$  we introduce the  norm
$$
\norm{ f }_{\hat L^1,\rho }  =  \sum_{k\geq 0}  |a_k|R^k\rho^k  +\sum_{k<0}  |a_k| \rho^{-k} r^{k} , \quad \rho>0
$$
and define the constants
$$
K_1(A_{R,r}):=\sup\left\{ \rho>0:   \norm{f}_{\hat L^1,\rho} \leq 1 \text{ for all }f \in H^\infty(A_{R,r}),\ \sup_{z\in A_{R,r}} | f(z)| \leq 1 \right\},
$$
and
\begin{equation}\label{Hinfsup}
K_1(\mathcal T_{A_{R,r}}):=\sup\left\{ \rho>0:   \norm{f}_{\hat L^1,\rho}\leq 1 \text{ for all }f \in H^\infty(A_{R,r}),\  \| f\|_{\mathcal T^\circ_{A_{R,r}}}\leq 1 \right\}.
\end{equation}
 Note that, as in the polydisc case, the Agler norm $ \| f\|_{\mathcal T^\circ_{A_{R,r}}}$ is well defined; see \eqref{Aglercirc} for notation.  Further, 
one may replace $H^\infty(A_{R,r})$ by $\mathcal R(A_{R,r})$.
\begin{lemma}\label{KR} The following holds
 \begin{equation}\label{Rsup}
K_1(\mathcal T_{A_{R,r}})=\sup\left\{ \rho>0:   \norm{f}_{\hat L^1,\rho}\leq 1 \text{ for all }f \in \mathcal R(A_{R,r}),\  \| f\|_{\mathcal T_{A_{R,r}}}\leq 1 \right\}.
\end{equation}
\end{lemma}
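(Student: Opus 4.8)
Lemma~\ref{KR} asserts the equality of two suprema defining $K_1(\mathcal T_{A_{R,r}})$: one taken over $f\in H^\infty(A_{R,r})$ with $\norm{f}_{\mathcal T^\circ_{A_{R,r}}}\leq 1$ (the strict-contraction Agler norm), and one over $f\in\mathcal R(A_{R,r})$ with $\norm{f}_{\mathcal T_{A_{R,r}}}\leq 1$ (the closed-class Agler norm). There are two independent changes between the two formulations — the function class ($H^\infty$ versus rational) and the operator class ($\mathcal T^\circ$ versus $\mathcal T$) — and the plan is to handle each by a density/approximation argument and show the two suprema dominate each other.

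\medskip\noindent\textbf{Plan.}
First I would note that by Proposition~\ref{Agler<}(ii) (which applies to any annulus $A_{R,r}$, the hole being a centered disc contained in the outer disc) the two Agler norms $\norm{\cdot}_{\mathcal T_{A_{R,r}}}$ and $\norm{\cdot}_{\mathcal T^\circ_{A_{R,r}}}$ \emph{coincide} on $\mathcal R(A_{R,r})$. This removes the operator-class discrepancy on the rational side at once, so it only remains to compare the $H^\infty$ description with the rational one. For the inequality ``$\geq$'' (the supremum in \eqref{Rsup} is at least $K_1(\mathcal T_{A_{R,r}})$): given $\rho<K_1(\mathcal T_{A_{R,r}})$ and a rational $f$ with $\norm{f}_{\mathcal T_{A_{R,r}}}\leq 1$, since $f\in\mathcal R(A_{R,r})\subseteq H^\infty(A_{R,r})$ and its closed-class norm equals its open-class norm, the defining property of $K_1(\mathcal T_{A_{R,r}})$ gives $\norm{f}_{\hat L^1,\rho}\leq 1$; hence $\rho$ is admissible on the right of \eqref{Rsup}, and taking the supremum over such $\rho$ yields the inequality. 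For ``$\leq$'': let $\rho_0$ denote the right-hand side of \eqref{Rsup}; I must show every $f\in H^\infty(A_{R,r})$ with $\norm{f}_{\mathcal T^\circ_{A_{R,r}}}\leq 1$ satisfies $\norm{f_{\rho}}_{\hat L^1}$-type bound $\norm{f}_{\hat L^1,\rho}\leq 1$ for all $\rho<\rho_0$, which will force $K_1(\mathcal T_{A_{R,r}})\geq\rho_0$.

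\medskip\noindent\textbf{The approximation step.}
The crux is to pass from $H^\infty(A_{R,r})$ to $\mathcal R(A_{R,r})$. Given $f=\sum_{k=-\infty}^\infty a_k z^k\in H^\infty(A_{R,r})$ with $\norm{f}_{\mathcal T^\circ_{A_{R,r}}}\leq 1$, consider the dilated functions $f^{(s)}(z):=f(sz)$ for $s\in(0,1)$ close to $1$; for the annulus the natural analogue is to shrink toward the ``core circle'' $|z|=\sqrt{Rr}$, i.e. take $f^{(s)}(z)=f(s_1 z)$ with a suitable scaling that keeps the image inside $A_{R,r}$ and makes $f^{(s)}$ extend holomorphically to a neighborhood of $\overline{A_{R,r}}$. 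Such $f^{(s)}$ lies in $\mathcal R$-closure and can be uniformly approximated by Laurent polynomials (finite truncations of its Laurent series, which converge uniformly on compacta because $f^{(s)}$ is holomorphic on a larger annulus), giving elements of $\mathcal R(A_{R,r})$. One checks that $\norm{f^{(s)}}_{\mathcal T^\circ_{A_{R,r}}}\leq\norm{f}_{\mathcal T^\circ_{A_{R,r}}}\leq1$ (plugging $sz$ into a contraction $T\in\mathcal T^\circ$ keeps one inside the class, roughly because $\gamma_j(sT)$ stays a contraction for the annulus), and that $\norm{f^{(s)}}_{\hat L^1,\rho}\to\norm{f}_{\hat L^1,\rho}$ and the Laurent truncations converge to $f^{(s)}$ in both the $\hat L^1,\rho$-norm (for $\rho$ strictly below the radius of admissibility, the Laurent coefficients decay geometrically) and the Agler norm. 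Applying the definition of $\rho_0$ to the truncations and passing to the limit — first in the truncation, then in $s\to1$ — yields $\norm{f}_{\hat L^1,\rho}\leq 1$ for every $\rho<\rho_0$, as required.

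\medskip\noindent\textbf{Main obstacle.}
The delicate point is the convergence $\norm{f^{(s)}}_{\hat L^1,\rho}\to\norm{f}_{\hat L^1,\rho}$ uniformly, and more precisely the interchange of limits: one must ensure that shrinking the annulus and truncating the Laurent series does not lose mass in the $\hat L^1,\rho$-norm, which requires that $\rho$ be taken \emph{strictly} below the relevant admissibility radius so that the tails $\sum_{|k|>N}|a_k|R^{|k|}\rho^{|k|}$ (and the negative-index analogue) are controlled by a convergent geometric series, uniformly in $s$. Equally, one must verify carefully that the dilation/scaling that realizes the passage to a neighborhood of $\overline{A_{R,r}}$ is compatible with the Agler norm — i.e. that it does not increase $\norm{\cdot}_{\mathcal T^\circ_{A_{R,r}}}$ — which for the annulus is not quite as transparent as the polydisc case $f\mapsto f(sz)$ and should be spelled out using the explicit form $\gamma(z)=(z/R,\,r/z)$. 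Beyond these, the argument is the standard ``density of rational functions plus semicontinuity of norms'' routine, and the identification of the two Agler norms via Proposition~\ref{Agler<} does the remaining work for free.
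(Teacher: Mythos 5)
Your ``$\geq$'' direction is correct and coincides with the paper's: Proposition~\ref{Agler<} identifies $\norm{\cdot}_{\cT_{A_{R,r}}}$ with $\norm{\cdot}_{\cT^\circ_{A_{R,r}}}$ on rational functions, and $\mathcal R(A_{R,r})\subseteq H^\infty(A_{R,r})$ then shows that every $\rho$ admissible for the $H^\infty$ formulation is admissible in \eqref{Rsup}.

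The approximation step for the reverse direction has a genuine gap: the dilation $f^{(s)}(z)=f(sz)$, or any variant ``shrinking toward the core circle,'' does not exist for the annulus. A radial scaling with $s<1$ sends the inner boundary $|z|=r$ to $|z|=sr$, outside the domain of $f$, and no holomorphic substitute is available: a degree-one holomorphic map of $A_{R,r}$ into a strictly thinner concentric sub-annulus would have to decrease the conformal modulus, which is impossible --- unlike the disc and the polydisc, the annulus is rigid in this respect. The operator-level version of your claim fails for the same reason: if $T\in\cT^\circ_{A_{R,r}}$ has $\norm{T^{-1}}$ close to $1/r$, then $\norm{(sT)^{-1}}=s^{-1}\norm{T^{-1}}>1/r$, so $sT$ leaves the class. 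Separately, even after a hypothetical regularization, truncating the Laurent series does not preserve the bound $\norm{\cdot}_{\cT^\circ_{A_{R,r}}}\le 1$ (partial sums are not contractive even for the supremum norm). The paper's proof replaces dilation-plus-truncation by a single device that avoids both problems: the Ces\`aro/Fej\'er means $f_n(z)=\sum_{|k|\le 2n}\bigl(1-\tfrac{|k|}{2n+1}\bigr)a_kz^k$. These are Laurent polynomials, hence already in $\mathcal R(A_{R,r})$, and they satisfy $\norm{f_n}_{\cT^\circ_{A_{R,r}}}\le\norm{f}_{\cT^\circ_{A_{R,r}}}$ because they are averages of the rotations $f(\e^{\ii\theta}\,\cdot)$ against the positive Fej\'er kernel, and rotations --- unlike dilations --- preserve the annulus and the class $\cT^\circ_{A_{R,r}}$ (this is \cite[Theorem 4.2.10]{Mittal}). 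Since moreover $\norm{f_n}_{\hat L^1,\rho}\to\norm{f}_{\hat L^1,\rho}$, any $\rho$ that fails for some $H^\infty$ function already fails for one of its Fej\'er means, which is the contrapositive of your ``$\le$'' direction. Your argument can be repaired by substituting this rotation-averaging step for the dilation.
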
 
\begin{proof}
The statement has the form
$$
\sup W_\mathcal{R} =  \sup W_{H^\infty},
$$
where the subsets   $W_{H^\infty}$ and $W_{\mathcal R}$ of $(0,+\infty)$ are defined according to \eqref{Hinfsup} and \eqref{Rsup}, respectively.
We show that $  W_{H^\infty}= W_\mathcal{R}$. The inclusion `$\subseteq$' follows directly from  Proposition~\ref{Agler<}.
To see the  reverse inclusion  take  $\rho>0$ which is not in $W_{H^\infty}$. Hence,   there exists 
    $f \in H^\infty(A_{R,r})$, $f(z)=\sum_{k=-\infty}^\infty a_kz^k$
    with $\norm f_{\mathcal T^\circ_{A(R,r)}}\leq 1$ and $\norm f_{\hat L,\rho}>1$. 
    The related  Laurent polynomials 
$$
f_n(z)=\sum_{k=-2n}^{2n}\big(1-\frac{|k|}{2n+1}\big)   a_kz^k
$$ 
converge locally uniformly to $f$ with $\norm {f_n}_{\mathcal T^\circ_{A(R,r)}}\leq \norm f_{\mathcal T^\circ_{A(R,r)}}$,
cf.\ \cite[Theorem 4.2.10]{Mittal}.   Further,  it is elementary that  $\norm{ f_n}_{\hat L^1,\rho} $ converges with $n$
to $\norm{ f}_{\hat L^1,\rho}>1 $.  Hence, for $n$ sufficiently large,  the function $f_n$ witnesses that $\rho$
is not in $W_\mathcal{R}$, which finishes the proof.
\end{proof}

It is apparent that 
$$
K_1(A_{R,r})\leq K_1(\mathcal T_{A_{R,r}})\leq \frac 13,\quad 0<r<R. 
$$
Now let us show a lower bound on $K_1(\mathcal T_{A_{R,r}})$. 
\begin{theorem}\label{K1K2}
If $r/R\leq K_2^2$, then  $K_1(\mathcal T_{A_{R,r}})\geq K_2$.
\end{theorem}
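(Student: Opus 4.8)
The plan is to use the Agler-norm identity from Corollary~\ref{cor_annulus} together with the definition of $K_2$ to reduce the annulus statement to the known bidisc Bohr inequality. Concretely, fix $\rho<K_2$ and take an arbitrary $f\in\mathcal R(A_{R,r})$ with $\|f\|_{\mathcal T_{A_{R,r}}}\le 1$; I must show $\|f\|_{\hat L^1,\rho}\le 1$, and then invoke Lemma~\ref{KR} and pass $\rho\to K_2$.

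First I would, via Corollary~\ref{cor_annulus} (or directly Theorem~\ref{preptheoremD}), choose $G\in\mathcal R(\overline{\mathbb D^2})$ with $\gamma^*(G)=F$ — i.e.\ $G(z/R,r/z)=f(z)$ — and $\sup_{\overline{\mathbb D^2}}|G|$ as close to $\|f\|_{\mathcal T_{A_{R,r}}}\le 1$ as we like, say $\le 1+\delta$. Write the Taylor expansion $G(z_1,z_2)=\sum_{i,j\ge 0}c_{ij}z_1^iz_2^j$. The substitution $z_1=z/R$, $z_2=r/z$ gives $f(z)=\sum_{i,j}c_{ij}R^{-i}r^j z^{i-j}$, so the $k$-th Laurent coefficient of $f$ is $a_k=\sum_{i-j=k}c_{ij}R^{-i}r^j$. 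The crucial observation is then a direct computation: for $k\ge 0$,
\[
|a_k|R^k\rho^k\le\sum_{i-j=k}|c_{ij}|R^{-i}r^jR^k\rho^k=\sum_{i-j=k}|c_{ij}|\rho^k R^{-j}r^j=\sum_{i-j=k}|c_{ij}|(r/R)^j\rho^{i-j},
\]
and similarly for $k<0$ with exponents swapped, so that
\[
\|f\|_{\hat L^1,\rho}\le\sum_{i,j\ge 0}|c_{ij}|\,(r/R)^{\min(i,j)}\rho^{|i-j|}.
\]
Under the hypothesis $r/R\le K_2^2\le\rho^2$ (for $\rho<K_2$) we have $(r/R)^{\min(i,j)}\le\rho^{2\min(i,j)}$, hence $(r/R)^{\min(i,j)}\rho^{|i-j|}\le\rho^{2\min(i,j)+|i-j|}=\rho^{i+j}$. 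Therefore $\|f\|_{\hat L^1,\rho}\le\sum_{i,j}|c_{ij}|\rho^{i+j}=\|G_\rho\|_{\hat L^1}$ where $G_\rho(z_1,z_2)=G(\rho z_1,\rho z_2)$.

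Finally, since $\rho<K_2\le K_2/(1+\delta)$ for $\delta$ small, and $G/(1+\delta)$ has supremum norm $\le 1$ on $\overline{\mathbb D^2}$, the definition of $K_2$ (the classical bidisk Bohr constant, applied to $G/(1+\delta)$) yields $\|(G/(1+\delta))_\rho\|_{\hat L^1}\le 1$, i.e.\ $\|G_\rho\|_{\hat L^1}\le 1+\delta$. Combining, $\|f\|_{\hat L^1,\rho}\le 1+\delta$; letting $\delta\to 0$ (the choice of $\delta$ is independent of $\rho$, only the closeness of $\sup|G|$ to $\|f\|_{\mathcal T}$ matters, so one can absorb it by rescaling $f$) gives $\|f\|_{\hat L^1,\rho}\le 1$. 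Since this holds for every such $f$, $\rho\in W_{\mathcal R}$ in the notation of Lemma~\ref{KR}, and taking the supremum over $\rho<K_2$ gives $K_1(\mathcal T_{A_{R,r}})\ge K_2$.

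I expect the main obstacle to be purely bookkeeping: correctly tracking how the Laurent coefficient $a_k$ of $f$ splits over the diagonal $\{i-j=k\}$ of Taylor coefficients of $G$, getting the exponent $(r/R)^{\min(i,j)}\rho^{|i-j|}$ right in both the $k\ge 0$ and $k<0$ cases, and cleanly handling the $\delta$-approximation in $\|f\|_{\mathcal T_{A_{R,r}}}=\inf\sup|G|$ so that the constant $K_2$ (not something slightly smaller) comes out in the end. One should double-check that the infimum in Corollary~\ref{cor_annulus} being possibly unattained does not cause trouble — it does not, because $\delta$ can be taken arbitrarily small and then killed by homogeneity of the Bohr inequality in $f$. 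A minor point worth a remark: the argument in fact shows $K_1(\mathcal T_{A_{R,r}})\ge K_2$ whenever $r/R\le K_2^2$, with equality-type sharpness left open.
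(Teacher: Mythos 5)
Your argument is essentially the paper's own proof: lift $f$ to $G$ on the bidisc via Theorem~\ref{preptheoremD} (equivalently Corollary~\ref{cor_annulus}), compare Laurent coefficients of $f$ with Taylor coefficients of $G$ along the diagonals $\{i-j=k\}$, and dominate $\|f\|_{\hat L^1,\rho}$ by $\sum_{i,j}|c_{ij}|\rho^{i+j}$; your bound $(r/R)^{\min(i,j)}\rho^{|i-j|}$ is exactly the paper's $s_+,s_-$ computation, and your $\delta$-rescaling correctly handles the possibly unattained infimum.

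There is, however, one genuine slip in the key inequality. You write ``$r/R\le K_2^2\le\rho^2$ (for $\rho<K_2$)'', but $\rho<K_2$ gives $\rho^2<K_2^2$, so the second inequality is backwards as stated. What you actually need is $r/R\le\rho^2$, and this holds only for $\rho\in\bigl[\sqrt{r/R},\,K_2\bigr)$; that interval is nonempty precisely because of the hypothesis $r/R\le K_2^2$, which is where the hypothesis genuinely enters. The restriction is harmless for the conclusion: $\|f\|_{\hat L^1,\rho}$ is nondecreasing in $\rho$, so the set of admissible $\rho$ in the definition of $K_1(\mathcal T_{A_{R,r}})$ is a down-set, and establishing $\|f\|_{\hat L^1,\rho}\le 1$ for $\rho$ in any left neighbourhood of $K_2$ already forces $K_1(\mathcal T_{A_{R,r}})\ge K_2$. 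The paper makes the same move by restricting to $\rho\in\bigl(\tfrac{r}{RK_2},K_2\bigr)$ and bounding each term by $s^{i+j}$ with $s=\max\bigl(\rho,\tfrac{r}{R\rho}\bigr)<K_2$. With that correction your proof is complete and coincides with the paper's.
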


\begin{proof}
Fix arbitrary $f(z)=\sum_{k=-\infty}^\infty  a_k z^k$ with  $\| f\|_{\mathcal T_{A_{R,r}}}< 1 $ and $\rho<K_2$.
By Lemma~\ref{KR} we may assume that $f\in\mathcal R(A_{R,r})$. 
By Theorem \ref{preptheoremD} there exists $g\in\mathcal R(\mathbb D^2)$ of norm
$\norm g_{\infty,\mathbb D^2}=\norm g_{\mathcal T_{\mathbb D^2}}<1$  such that $g\circ \gamma =f$.
Let $g(z)=\sum_\alpha c_\alpha z^\alpha$ be the Taylor expansion, where $z=(z_1,z_2)$, $\alpha=(\alpha_1,\alpha_2)$.
By definition of $K_2$ we have that $\sum_\alpha{c_\alpha} \rho^{|\alpha|}\leq 1$. Further
$$
f(z)=g\big(\frac zR,\frac rz\big )= \sum c_\alpha  \big(\frac zR\big)^{\alpha_1} \big(\frac rz\big)^{\alpha_2}=\sum c_\alpha R^{-\alpha_1}r^{\alpha_2}   z^{\alpha_1-\alpha_2}.
$$
Comparing this with the original expansion of $f$ we see that
$$
a_k=\sum_{\alpha_1-\alpha_2=k} c_\alpha \frac{ r^{\alpha_2}}{R^{\alpha_1}},\quad k \in\mathbb Z 
$$
and so
$$
s_+:=\sum_{k\geq0} |a_k|R^k\rho^{k}\leq \sum_{k\geq0}\sum_{\alpha: \alpha_1-\alpha_2=k} |c_\alpha |\frac{ r^{\alpha_2}}{R^{\alpha_1}} R^k \rho^{k}=\sum_{\alpha: \alpha_1\geq\alpha_2} | c_\alpha|  \left( \frac r{R\rho}\right)^{\alpha_2} \rho^{\alpha_1}
$$
while  
$$
s_-:=\sum_{k<0} |a_k|r^{k}\rho^{-k}\leq 
\sum_{k<0}\sum_{\alpha: \alpha_1-\alpha_2=k}  |c_\alpha| \frac{r^{\alpha_2}}{R^{\alpha_1}} r^k \rho^{-k}
	= \sum_{\alpha:\alpha_1<\alpha_2} |c_\alpha| \left( \frac r{R\rho}\right)^{\alpha_1}  \rho^{\alpha_2}.
$$
As $r/R<K_2^2$ the interval $(\frac r{RK_2},K_2)$ is nonempty and  for all $\rho\in(\frac r{RK_2},K_2)$ one obtains
$s_++s_-\leq \sum_\alpha |c_\alpha| s^{\alpha_1+\alpha_2}<1$ with  $s=\max(\rho, \frac r{R\rho})<K_2$.
Consequently, $K_1(\mathcal T_{A_{R,r}})\geq K_2$.
\end{proof}

As discussed in the Introduction, the above result has a consequence in the theory of spectral constants in Banach algebras.
We refer to \cite{BHSV2025} for related  results on the disc and numerical range.  The constant $\Psi(A(R,r))$
appearing below is the spectral constant of the annulus, it is known to be less or equal to $(1+\sqrt 2)$
\cite[Theorem 10]{crouzeix2019spectral}, and not smaller than 2 \cite{tsikalas2021}.
See also \cite{crouzeix2012annulus} for other $R$-dependent bounds.

\begin{theorem} \label{Banach}
Consider a Banach  algebra $\mathcal B$. If $T\in \mathcal{B}$ is such that $\norm T_{\mathcal B}< R$ and
$\norm{T^{-1}}_{\mathcal B}<  r^{-1}$,  then the annulus $A(\tilde R,\tilde r)$, $\tilde R:=K_2^{-1}R$,
$\tilde r:=K_2r$ is a $\Psi(A(R,r))$-spectral set for $T$, namely  
$$
\norm{f(T)}_{\mathcal B}\leq   \| f \|_{\mathcal T(A_{\tilde R,\tilde r    })}\leq \Psi(A(R,r))\sup_{\tilde r\leq |z|\leq \tilde R} |f(z)| 
$$
for any $ f\in\mathcal R( \tilde R,\tilde r )$.
\end{theorem}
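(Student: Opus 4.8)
The plan is to establish the two displayed inequalities separately. The second, $\|f\|_{\mathcal T(A_{\tilde R,\tilde r})}\le\Psi(A(R,r))\sup_{\tilde r\le|z|\le\tilde R}|f|$, is a statement purely about Hilbert-space operators: any $S\in\mathcal T_{A_{\tilde R,\tilde r}}$ satisfies $\|S\|\le\tilde R$ and $\|S^{-1}\|\le\tilde r^{-1}$, so both the disc $\{|z|\le\tilde R\}$ and the complementary disc $\{|z|\ge\tilde r\}$ are spectral sets for $S$ by von Neumann's inequality. Thus $S$ falls in the scope of the annulus spectral-constant estimate of \cite[Theorem 10]{crouzeix2019spectral}, which gives $\|f(S)\|\le\Psi(A(R,r))\sup_{\tilde r\le|z|\le\tilde R}|f|$ with the constant at most $1+\sqrt2$; taking the supremum over $S\in\mathcal T_{A_{\tilde R,\tilde r}}$ yields the claim.

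The first inequality, $\|f(T)\|_{\mathcal B}\le\|f\|_{\mathcal T(A_{\tilde R,\tilde r})}$, is where the Bohr estimate enters. Write $f=\sum_{k\in\mathbb Z}a_kz^k$ for the Laurent expansion of $f\in\mathcal R(A_{\tilde R,\tilde r})$, valid on a slightly larger annulus. Since $\|T\|<R<\tilde R$ and $\|T^{-1}\|<r^{-1}<\tilde r^{-1}$, the series $f(T)=\sum_{k\ge0}a_kT^k+\sum_{k<0}a_k(T^{-1})^{-k}$ converges absolutely in $\mathcal B$. Put $\rho:=\max\{\|T\|\,\tilde R^{-1},\ \tilde r\,\|T^{-1}\|\}$; since $\tilde R=K_2^{-1}R$ and $\tilde r=K_2r$, the hypotheses become $\rho<K_2$. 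A term-by-term estimate then gives
$$
\|f(T)\|_{\mathcal B}\ \le\ \sum_{k\ge0}|a_k|(\tilde R\rho)^k+\sum_{k<0}|a_k|(\rho/\tilde r)^{|k|}\ =\ \|f\|_{\hat L^1,\rho},
$$
the final equality being the definition of $\|\cdot\|_{\hat L^1,\rho}$ attached to the annulus $A_{\tilde R,\tilde r}$.

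It remains to bound $\|f\|_{\hat L^1,\rho}$ by $\|f\|_{\mathcal T(A_{\tilde R,\tilde r})}$. Here $\tilde r/\tilde R=K_2^2\,(r/R)\le K_2^2$, so Theorem~\ref{K1K2} gives $K_1(\mathcal T_{A_{\tilde R,\tilde r}})\ge K_2>\rho$. As $\|g\|_{\hat L^1,t}$ is nondecreasing in $t>0$ (every exponent of $t$ occurring is nonnegative), the set of $t$ admissible in the supremum defining $K_1(\mathcal T_{A_{\tilde R,\tilde r}})$ is downward closed; hence, in the reformulation of Lemma~\ref{KR} (over $\mathcal R(A_{\tilde R,\tilde r})$ normed by $\|\cdot\|_{\mathcal T(A_{\tilde R,\tilde r})}$), $\|g\|_{\hat L^1,\rho}\le1$ for every $g$ with $\|g\|_{\mathcal T(A_{\tilde R,\tilde r})}\le1$. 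Applying this to $g=f/\|f\|_{\mathcal T(A_{\tilde R,\tilde r})}$ --- the case $f\equiv0$ being trivial and the norm finite by Proposition~\ref{basicAglerD} --- gives $\|f\|_{\hat L^1,\rho}\le\|f\|_{\mathcal T(A_{\tilde R,\tilde r})}$. Chaining the two inequalities shows that $A(\tilde R,\tilde r)$ is a $\Psi(A(R,r))$-spectral set for $T$.

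I do not anticipate a genuinely hard step: the proof assembles the Bohr-type estimate of Theorem~\ref{K1K2}, its rational reformulation in Lemma~\ref{KR}, and the classical annulus spectral-constant bound. What needs care is the rescaling $R\mapsto\tilde R=K_2^{-1}R$, $r\mapsto\tilde r=K_2r$: it is exactly this choice that forces both $\|T\|/\tilde R$ and $\tilde r\|T^{-1}\|$ strictly below the bidisk Bohr radius $K_2$, while at the same time keeping $\tilde r/\tilde R\le K_2^2$, the hypothesis under which Theorem~\ref{K1K2} applies; and the strictness in $\|T\|<R$, $\|T^{-1}\|<r^{-1}$ is what produces the strict inequality $\rho<K_2$, needed to remain below the supremum-defined constant $K_1(\mathcal T_{A_{\tilde R,\tilde r}})$.
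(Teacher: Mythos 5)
Your proposal is correct and follows essentially the same route as the paper: choose $\rho=\max\{\|T\|/\tilde R,\ \tilde r\|T^{-1}\|\}<K_2$, bound $\|f(T)\|_{\mathcal B}$ term by term by $\|f\|_{\hat L^1,\rho}$, and then control that by $\|f\|_{\mathcal T(A_{\tilde R,\tilde r})}$ via Theorem~\ref{K1K2} (using $\tilde r/\tilde R=K_2^2\,r/R\le K_2^2$) together with Lemma~\ref{KR}, the second inequality coming from the annulus spectral constant. You additionally spell out two points the paper leaves implicit --- the monotonicity of $\|\cdot\|_{\hat L^1,\rho}$ in $\rho$ (so every $\rho$ below $K_1(\mathcal T_{A_{\tilde R,\tilde r}})$ is admissible) and the normalization of $f$ --- which is a welcome tightening rather than a different argument.
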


\begin{proof}
Observe that $\tilde r/\tilde R<K_2^2$, hence, by Theorem~\ref{K1K2} we have $ K_1(\mathcal T_{\tilde R,\tilde r} )\geq K_2$.
Thus, by definition of $K_1(\mathcal T_{\tilde R,\tilde r} )$,
for $f(z)=\sum_{k=-\infty}^\infty a_kz^k \in\mathcal R( A_{K_2^{-1}R,K_2r })$ with $\|f\|_{\mathcal T(A_{R,r})}=1 $
and some $\rho <K_2\leq K_1(\mathcal T_{\tilde R,\tilde r} )$ we have
\begin{eqnarray*}
\norm{f(T)}_{\mathcal B} &\leq& \sum_{k\geq0} |a_k|\norm{T}_{\mathcal B}^k +\sum_{k<0}|a_k|\norm{T^{-1}}_{\mathcal B}^{-k} \\
&\leq& \sum_{k\geq0} |a_k|\tilde R^k\rho ^k  +\sum_{k<0}|a_k|  \tilde r^{k}\rho^{-k} \\
&\leq & \norm {f}_{\hat L^1,_\rho}\\
&\leq & 1,
\end{eqnarray*}
which shows the first inequality. The second follows from the definition of $\Psi(A(R,r))$.
\end{proof}

It is known that $K_2\in[0.3006, 1/3  ]$. We use this opportunity to provide a better upper bound by giving a concrete
rational function $f$. The values of the coefficients were found by a numerical optimizing procedure,
but the proof uses exact calculations.

\begin{theorem}\label{K2est} $K_2<0.3177$. \end{theorem}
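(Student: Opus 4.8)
The plan is to exhibit one concrete scalar rational function $f$ on $\overline{\mathbb D^2}$ that is contractive on the bidisk yet whose weighted coefficient sum exceeds $1$ at a radius slightly below $0.3177$, and then to conclude by a monotonicity/continuity argument. Recall that
\[
K_2=\sup\Bigl\{\rho>0:\ \norm{f_\rho}_{\hat L^1}=\textstyle\sum_\alpha|c_\alpha|\rho^{|\alpha|}\le1\ \text{ for every }f=\sum_\alpha c_\alpha z^\alpha\in H^\infty(\mathbb D^2)\text{ with }\sup_{\mathbb D^2}|f|\le1\Bigr\}.
\]
For a fixed admissible $f$ the map $\rho\mapsto\norm{f_\rho}_{\hat L^1}$ is nondecreasing and, since the $f$ we build is analytic on a neighbourhood of $\overline{\mathbb D^2}$, continuous on $[0,1]$. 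Hence if some admissible $f$ satisfies $\norm{f_{\rho_0}}_{\hat L^1}>1$ with $\rho_0:=0.3177$, then already $\norm{f_{\rho_1}}_{\hat L^1}>1$ for some $\rho_1<\rho_0$, so no $\rho\ge\rho_1$ lies in the defining set and $K_2\le\rho_1<0.3177$. It therefore suffices to produce one such $f$.

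For $f$ I would take the transfer function
\[
f(z)=D+C\,Z(I-AZ)^{-1}B,\qquad Z=z_1I_{m}\oplus z_2I_{m},
\]
of an explicit \emph{contractive} colligation matrix $\left[\begin{smallmatrix}A&B\\ C&D\end{smallmatrix}\right]$ of small size, whose entries are exact numbers obtained by rationalising the output of a numerical optimisation; taking $A$ nilpotent makes $f$ a polynomial and keeps everything finite. Since the colligation is contractive, Proposition~\ref{Jconminreal} (applied with $\Xi=\mathbb D^2$, $\mathbf P_+\equiv I$, $\mathbf P_-(z)=Z$) gives $\norm{f}_{\cT^\circ_{\mathbb D^2}}\le1$, and by And\^o's theorem $\sup_{\mathbb D^2}|f|\le1$, so $f$ is admissible. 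Alternatively, if one writes $f=p/q$ with explicit polynomials $p,q$, the same conclusion can be read off an explicit Hermitian sum-of-squares certificate for the trigonometric polynomial $|q(e^{is},e^{it})|^2-|p(e^{is},e^{it})|^2$ on $\IT^2$.

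Next I would expand $(I-AZ)^{-1}=\sum_{k\ge0}(AZ)^k$ to get $f=D+\sum_{k\ge0}C(ZA)^kZB$ and read off the coefficients $c_\alpha$ explicitly; when $A$ is nilpotent this is a finite list. Evaluating $\norm{f_{\rho_0}}_{\hat L^1}=\sum_\alpha|c_\alpha|\rho_0^{|\alpha|}$ in exact arithmetic (a rigorous rational lower bound is enough) one checks that the value exceeds $1$. If $f$ is genuinely rational, split the sum into a finite head, evaluated exactly, and a tail bounded via Cauchy's estimates $|c_\alpha|\le M(1+\varepsilon)^{-|\alpha|}$ on a slightly larger polydisk, giving $\sum_{|\alpha|>N}|c_\alpha|\rho_0^{|\alpha|}\le M\sum_{k>N}(k+1)(\rho_0/(1+\varepsilon))^k$, and choose $N$ large enough that head minus tail still exceeds $1$. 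Combined with admissibility and the monotonicity/continuity remark, this gives $K_2<0.3177$.

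The \textbf{main obstacle} is the global estimate $\sup_{\mathbb D^2}|f|\le1$: no finite set of sample points can establish it, so one must carry a genuine certificate — most cleanly the contractive colligation, which reduces the task to verifying $I-\left[\begin{smallmatrix}A&B\\ C&D\end{smallmatrix}\right]^*\left[\begin{smallmatrix}A&B\\ C&D\end{smallmatrix}\right]\succeq0$ for a small matrix with exact entries, or alternatively an SOS decomposition on $\IT^2$. The secondary difficulty is quantitative: the margin by which $\norm{f_{\rho_0}}_{\hat L^1}$ beats $1$ is small, so the exact arithmetic (and, in the rational case, the truncation level $N$) has to be handled carefully rather than numerically.
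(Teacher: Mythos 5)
Your strategy is essentially the paper's: exhibit one explicit function contractive on the bidisk whose weighted coefficient sum at $\rho_0=0.3177$ exceeds $1$, certify contractivity by an exact algebraic certificate rather than by sampling, compute the coefficients in exact arithmetic, and conclude via monotonicity of $\rho\mapsto\norm{f_\rho}_{\hat L^1}$. The paper's witness is the degree-$(1,1)$ rational inner function $f=p/\bar p$ with $p(z)=\det\bigl(I_2-D\,\diag(z_1,z_2)\bigr)$ for an explicit $2\times2$ matrix $D$ with $\norm{D}<1$ (which certifies that $f$ is inner and analytic on a neighbourhood of $\overline{\mathbb D^2}$), and the verification uses only the partial sum over $\alpha_1,\alpha_2\le 12$.

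Two remarks. First, the tail estimate you describe is superfluous: every term $|c_\alpha|\rho_0^{|\alpha|}$ is nonnegative, so any finite partial sum is already a lower bound for $\norm{f_{\rho_0}}_{\hat L^1}$, and it suffices that the head alone exceed $1$ --- which is exactly what the paper checks. (Also, passing from $\norm{f}_{\mathcal T^\circ_{\mathbb D^2}}\le1$ to $\sup_{\mathbb D^2}|f|\le1$ needs only evaluation at scalar points, not And\^o's theorem, which is the converse direction.) Second, and more substantively, what you have is a proof schema rather than a proof: the entire quantitative content of the theorem is the existence of a specific admissible $f$ beating $1$ at $\rho_0=0.3177$, and you do not produce one. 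It is also not evident that a low-degree polynomial transfer function with nilpotent $A$ can reach this value --- the paper's extremizer is a genuine rational inner function with an infinite Taylor expansion, consistent with the one-variable picture where M\"obius maps, not polynomials, are extremal for the Bohr radius. Until the colligation (or the matrix $D$) is written down with exact entries and the inequality verified, the bound $0.3177$ is not established.
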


\begin{proof}
It is enough to find a function $f(z)=\sum_{\alpha}  c_\alpha z^\alpha$, $z=(z_1,z_2)$, analytic on the bidisc,
with supremum norm one, such that
\begin{equation}\label{rho>1}
\sum_{\alpha_1,\alpha_2=0}^{12} |c_\alpha| \rho_0^{|\alpha|}>1,\quad \text{where } \rho_0:=0.3177. 
\end{equation}
The function $f$ is of the following form:
$$
f(z_1,z_2)=\frac{p(z)}{\bar p(z)}= \frac{z_1z_2+a_{01}z_2+a_{10}z_1+a_{00}  }{ a_{00}z_1z_2+a_{10}z_2+a_{01}z_1+1  },
$$
where
$$
p(z)=\det \left( I_2-D\begin{bmatrix} z_1\\& z_2\end{bmatrix}\right), \quad D=\begin{bmatrix}  0.854373111798292 &  -0.518782521594128  \\ 0.518794363700548   & 0.848187547437653
\end{bmatrix}.
$$
Observe that $\norm D<1$, hence $f$ is a rational inner function, with singularities outside the closed unit bidisc,
cf., e.g., \cite{knese2024rational}. 
The values of $c_{\alpha}$ ($|\alpha|\leq 12$) were computed in Python  in exact arithmetic using SymPy \cite{sympy},
as a solution of a linear equation arising from the polynomial equation
$$
\left(\sum_{\alpha_1,\alpha_2=0}^{12} c_\alpha z^\alpha\right)\bar p(z)=p(z).
$$
The same software was used to verify that \eqref{rho>1} holds.
\end{proof}

\section*{Acknowledgments}
H. J. Woerdeman and M. Wojtylak wish to thank the Banff International Research Station for the opportunity
to work on this project during the workshop 23w5056.
The research of H. J. Woerdeman is partially supported by NSF grant DMS 2348720.
R. Baran was  supported by the  ID.UJ Initiative of Excellence - Research University.

The authors would like to thank Greg Knese for helpful discussions on the topic of the Bohr radius.

\bibliographystyle{plain}
\bibliography{refs}

\end{document}